\newtheorem{prop}{{\bf Proposition}}[section]
\newtheorem{coro}[prop]{{\bf Corollary}}
\newtheorem{lemma}[prop]{{\bf Lemma}} 
\newtheorem{theor}[prop]{{\bf Theorem}} 
\newtheorem{ex}[prop]{{\bf Example}} 
\def\F{{\mathbb F}}
\def\ad{\textup{ad}}
\def\chn{\color{black}}
\begin{document}
\author{Pilar P\'{a}ez-Guill\'{a}n}
\address{Departamento de Matem\'{a}ticas, Universidade de Santiago de Compostela,
R\'{u}a Lope G\'{o}mez de Marzoa, 15782--Santiago de Compostela, Spain}
\email{pilar.paez@usc.es}
\author{Salvatore Siciliano}
\address{Dipartimento di Matematica e Fisica ``Ennio De Giorgi", Universit\`{a} del Salento,
Via Provinciale Lecce--Arnesano, 73100--Lecce, Italy}
\email{salvatore.siciliano@unisalento.it}
\author{David A. Towers}
\address{Lancaster University\\
Department of Mathematics and Statistics \\
LA$1$ $4$YF Lancaster\\
England}
\email{d.towers@lancaster.ac.uk}

\subjclass[2010]{17B50, 17B05, 17B30, 17B60, 06D05.}
\keywords{Restricted Lie algebra, restricted subalgebra, Frattini $p$-ideal, dually atomistic, restricted quasi-ideal, lower semimodular, upper semimodular, $J$-algebra, supersolvable.}

\begin{abstract} In this paper we study the lattice of restricted subalgebras of a restricted Lie algebra. In particular, we consider those algebras in which this lattice is dually atomistic, lower or upper semimodular, or in which every restricted subalgebra is a quasi-ideal. The fact that there are one-dimensional subalgebras which are not restricted results in some of these conditions being weaker than for the corresponding conditions in the non-restricted case.
\end{abstract}
\title[On the subalgebra lattice of a restricted Lie algebra]{On the subalgebra lattice of a restricted Lie algebra}
\maketitle
\section{Introduction}
The relationship between the structure of a group and that of its lattice of subgroups is highly developed and has attracted the attention of many leading algebraists. According to Schmidt (\cite{schmidt}), the origin of the subject can be traced back to Dedekind, who studied the lattice of ideals in a ring of algebraic integers; he discovered and used the modular identity, which is also called the Dedekind law, in his calculation of ideals. Since then modularity and lattice conditions related to it have been studied in a number of contexts. The lattice of submodules of a module over a ring is modular, and hence so is the lattice of subgroups of an abelian group. The lattice of normal subgroups of a group is also modular, but the lattice of all subgroups is not in general. 
\par

The study of the subalgebra lattice of a finite-dimensional Lie algebra was popular in the 1980's and in the 90's (see, for example, \cite{1,7,gein,3,gein2,10,kol,la,13,15,5,12,6,9}), but interest then waned.
The likely reason is that most of the conditions under investigation were too strong and so few algebras satisfied them. However, the lattice of restricted subalgebras of a restricted Lie algebra is fundamentally different; for example, not every element spans a one-dimensional restricted subalgebra. Thus, one could expect more interesting results to hold for restricted algebras and, as we shall see, this is indeed the case.
\par

In Section 2 we fix some notation and terminology and introduce some results that are needed later. In Section 3 we study restricted Lie algebras that are dually atomistic; that is, such that every restricted subalgebra is an intersection of maximal restricted subalgebras. We show that such algebras over an algebraically closed  field of positive characteristic are solvable or semisimple, and then characterise the solvable ones more precisely. It turns out that they are more abundant than in the non-restricted case. We then investigate those restricted Lie algebras all of whose subalgebras (not necessarily restricted) are intersections of maximals. It is shown  that if the ground field is algebraically closed then there are no such algebras that are perfect.
\par

The objective in Section 4 is to study restricted Lie algebras $L$ in which every restricted subalgebra is a restricted quasi-ideal; that is, such that $[S,H] \subseteq S+H$ for all restricted subalgebras $S,H$ of $L$. These are characterised over an algebraically closed field of characteristic different from 2, and the nilpotent ones more generally over a perfect field of characteristic different from 2. In this regard, we also mention that restricted Lie algebras over perfect fields all of whose restricted subalgebras are ideals were characterized by the second author in \cite{SS}. Section 5 then goes on to consider $J$-algebras and lower semimodular restricted Lie algebras. The main result here is the same as for the corresponding situation as in the non-restricted case if the underlying field is algebraically closed, but it is pointed out that the assumption of algebraic closure cannot be omitted.
\par

The final section is devoted to studying upper semimodular restricted Lie algebras.  It is shown that, over an algebraically closed field, such an algebra is either almost abelian or nilpotent of class at most $2$. This is proved by considering first upper semimodular restricted Lie algebras that are generated by their one-dimensional restricted subalgebras.  It is also shown that over such fields the conditions that $L$ is modular, $L$ is upper semimodular and every restricted subalgebra of $L$ is a quasi-ideal are equivalent.



\section{Preliminaries}
Here we fix some notation and terminology and introduce some results that will be needed later.  Unless otherwise stated, throughout the paper all algebras are supposed finite-dimensional. Let $L$ be a Lie algebra over a field $\F$.
As usual, the {\em derived series} for $L$ is defined inductively by $L^{(0)}=L$, $L^{(k+1)}=[L^{(k)},L^{(k)}]$ for $k\geq 0$, $L^{(\infty)}=\cap_{k\geq 0}L^{(k)}$; $L$ is {\em solvable} if $L^{(\infty)}=0$. Similarly, the {\em lower central series} is defined inductively by $L^1=L$, $L^{k+1}=[L^k,L]$ for $k\geq 1$; $L$ is {\em nilpotent} if $L^k=0$ for some $k\geq 1$.  Also, $L$ is said to be \emph{supersolvable} if it admits a complete flag made up of ideals of $L$, that is, there exists a chain 
$$
0=L_0\subsetneq L_1 \subsetneq \cdots \subsetneq L_n=L
$$
of ideals of $L$ such that $\dim L_j=j$ for every $0\leq j\leq n$.  The centre of $L$ is denoted by $Z(L)$,  and $C_B(A)=\{x\in B\colon [x,A]=0\}$ denotes the {\it centraliser} in a subalgebra $B$ of another subalgebra $A$. Also, the {\it ascending central series} is defined inductively by $C_1(L)=Z(L)$, $C_{n+1}(L)=\{x\in L\colon [x,L]\subseteq C_n(L)\}$. The {\em nilradical} $N(L)$ is defined to be the maximal nilpotent ideal, and the {\it solvable radical}, denoted by $R(L)$, is defined to be the maximal solvable ideal. For every $x\in L$, the adjoint map of $x$ is defined by $\ad(x)\colon L \rightarrow L$, $a\mapsto [x,a]$. If $S$ is a subalgebra of $L$, then the largest ideal of $L$ contained in $S$ is called the {\em core} of $L$ and is denoted by $S_L$. The {\em Frattini subalgebra} $F(L)$ of $L$ is the intersection of all maximal subalgebras of $L$; the {\em Frattini ideal} of $L$ is $\phi(L)=F(L)_L$. The {\em abelian socle}, $Asoc(L)$, is the sum of the minimal abelian ideals of $L$. We will denote algebra direct sums by $\oplus$, whereas direct sums of the vector space structure alone will be written as $\dot{+}$.

We say that $L$ is {\em dually atomistic} if every subalgebra of $L$ is an intersection of maximal subalgebras of $L$. The Lie algebra $L$ is said to be {\em almost abelian} if $L=\F x\dot{+} A$, where $A$ is an abelian 
ideal of $L$ and $\ad (x)$  acts  as the identity map on $A$.
Scheiderer proved in \cite{sch} that, over a field of characteristic zero, every dually atomistic Lie algebra is abelian, almost abelian or simple. Here we establish a slightly weaker version of this which is valid over any field. 

\begin{prop}\label{da} If $L$ is a dually atomistic Lie algebra over any field then $L$ is either
  abelian, almost abelian or semisimple.
\end{prop}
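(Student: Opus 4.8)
The plan is to induct on $\dim L$, after isolating two facts. First, since $\{0\}$ is an intersection of maximal subalgebras and $F(L)$ is contained in every maximal subalgebra, $F(L)=0$; in particular $L$ is $\phi$-free. Second --- and this is where dual atomicity is genuinely stronger than $\phi$-freeness --- every minimal abelian ideal $A$ of $L$ is one-dimensional. Indeed, if $0\neq v\in A$ and $M$ is any maximal subalgebra containing $\F v$, then $M\cap A\triangleleft L$: this is clear if $A\subseteq M$, and otherwise $A+M=L$ and $[L,M\cap A]=[A+M,M\cap A]\subseteq[A,A]+(M\cap A)=M\cap A$ since $A$ is abelian and $M\cap A\triangleleft M$. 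As $0\neq v\in M\cap A$, minimality of $A$ gives $A\subseteq M$, so $A$ lies in the intersection of all maximal subalgebras containing $\F v$, which by dual atomicity is $\F v$; hence $\dim A=1$. I would also record that dual atomicity passes to quotients: if $B\triangleleft L$ and $B\subseteq S\leq L$, then $S$ is the intersection of the maximal subalgebras above it, all of which contain $B$, so $S/B$ is an intersection of maximal subalgebras of $L/B$.

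Using the Frattini theory of $\phi$-free Lie algebras, $N(L)=Asoc(L)$; being a sum of minimal abelian ideals it is abelian, and by the lemma it is a direct sum of one-dimensional ideals of $L$. If $N(L)=0$ then $R(L)=0$ and $L$ is semisimple; otherwise I assume $L$ is not abelian and aim to show it is almost abelian. The next step is to prove $Z(L)=0$. If not, pick a one-dimensional central ideal $\F v$; then $L/\F v$ is dually atomistic of smaller dimension, hence by induction abelian, almost abelian, or semisimple. If $L/\F v$ is abelian then $L^2\subseteq\F v\subseteq Z(L)$, so $L$ is nilpotent and therefore ($\phi$-free) abelian, a contradiction. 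If $L/\F v$ is almost abelian and not abelian, say $L/\F v=\F\bar x\,\dot{+}\,\bar A$, let $A$ be the preimage of $\bar A$: it is the nilradical (a nilpotent ideal of codimension one, $L$ not being nilpotent), hence abelian, and since $\ad x$ acts as the identity on $A/\F v$ and kills $v$, its minimal polynomial divides $t(t-1)$, so it is semisimple on $A$ with eigenvalues $0,1$; thus $L\cong\F v\oplus(\F x\,\dot{+}\,A'')$ with $A''$ the $1$-eigenspace. Finally, if $L/\F v$ is semisimple then $R(L)=\F v$ and a Levi decomposition yields $L=\F v\oplus S$ with $S$ semisimple. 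In the last two cases $L=\F v\oplus L_1$ with $0\neq L_1$ almost abelian or semisimple, and I claim such an $L$ is not dually atomistic: taking $a$ to be a suitable nonzero element of $L_1$ (in the abelian ideal, resp. arbitrary), one checks that every maximal subalgebra of $L$ containing $v+a$ contains both $v$ and $a$, so the intersection of the maximal subalgebras above $\F(v+a)$ is at least $\langle v,a\rangle\supsetneq\F(v+a)$. This contradiction proves $Z(L)=0$.

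Now pick a one-dimensional minimal ideal $\F v$ of $L$, with $[x,v]=\lambda(x)v$ for a character $\lambda$; since $Z(L)=0$ we have $\lambda\neq 0$. Set $K=\ker\lambda$ and choose $x$ with $\lambda(x)=1$, so $L=\F x\,\dot{+}\,K$. By induction $L/\F v$ is abelian, almost abelian, or semisimple. If it is semisimple then $R(L)=\F v$ and a Levi decomposition forces $[S,v]=0$, i.e.\ $v\in Z(L)=0$, absurd. If $L/\F v$ is abelian then $L^2\subseteq\F v$, and as $v=[x,v]\in L^2$ we get $L^2=\F v$; writing $[y,z]=\Phi(y,z)v$ for an alternating form $\Phi$ on $L$ whose radical is $Z(L)=0$, an easy computation with the Jacobi identity forces $\Phi$ to have rank at most $2$, so non-degeneracy gives $\dim L=2$ and $L$ is the non-abelian two-dimensional algebra, which is almost abelian. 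If $L/\F v$ is almost abelian (and not abelian), let $A$ be the preimage of its abelian part; then $A=N(L)$ is an abelian ideal of codimension one, $[A,v]=0$ (since $A\subseteq K$), and $\ad x$ acts on $A$ as the identity modulo $\F v$ and as the identity on $\F v$, so $\ad x|_A=\mathrm{id}+N$ with $N(A)\subseteq\F v$ and $N^2=0$. If $N\neq 0$ then every hyperplane of $A$ containing $\F v$ is an $\ad x$-invariant subalgebra, and it follows that every maximal subalgebra of $L$ contains $v$, contradicting $F(L)=0$; hence $N=0$, $\ad x$ is the identity on $A$, and $L$ is almost abelian.

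The main obstacle is the analysis of the centre: one must recognise that the $\phi$-free algebras $\F v\oplus(\text{almost abelian})$ and $\F v\oplus(\text{semisimple})$ survive all the Frattini-theoretic reductions and have to be excluded by a second, direct appeal to dual atomicity --- via the ``non-separable'' element $v+a$, which cannot be cut down by any maximal subalgebra that contains it. A secondary point needing care is that the argument above uses Levi decompositions, so over a field of positive characteristic one should instead work with the radical $R(L)=\F v$ and the structure of $\phi$-free Lie algebras over an arbitrary field, which is precisely why the conclusion is ``semisimple'' rather than ``simple''.
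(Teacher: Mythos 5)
Your strategy --- induction on $\dim L$ together with a case analysis of $L/\F v$ for a one-dimensional minimal ideal $\F v$ --- is genuinely different from the paper's, which argues directly: using Amayo's classification of Lie algebras modulo the core of a codimension-one maximal subalgebra, it shows that every maximal subalgebra either contains $a_1$ or contains $L^{(\infty)}$, then kills $L^{(\infty)}$ with the same ``non-separable element'' trick you use for the centre, and finally quotes Scheiderer's lemma for the solvable case. Several of your steps correctly reprove the paper's ingredients (one-dimensionality of minimal abelian ideals, passage of dual atomicity to quotients, the $\phi$-free splitting over $Asoc(L)$), and your treatment of the abelian and almost-abelian quotient cases is sound.

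The genuine gap is in the cases where $L/\F v$ is semisimple, and it is exactly the point you flag at the end but do not repair. The proposition is asserted over an arbitrary field, and in characteristic $p>0$ there is no Levi decomposition; worse, a semisimple Lie algebra (meaning $R=0$) need not be perfect. The central subcase is repairable without Levi: since $R(L)=\F v=Asoc(L)$, the $\phi$-free splitting gives $L=\F v\oplus B$ and one can run your $v+a$ argument with $a\in B^2$ (note that $a$ ``arbitrary'' is not enough when $B\neq B^2$, since a maximal subalgebra of the form $\{b+\mu(b)v\}$ with $\mu$ a nonzero character can contain $v+a$). But in the non-central subcase your conclusion ``$[S,v]=0$, so $v\in Z(L)=0$'' uses perfectness of the complement: the splitting only gives $L=\F v\dot{+}B$ with $[b,v]=\lambda(b)v$ for a character $\lambda$ vanishing on $B^2$, and since $B^2$ may be proper in $B$ in positive characteristic, the configuration with $\lambda\neq 0$ is not excluded. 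This is precisely the situation the paper's appeal to Amayo's theorems is designed to handle: it yields that any maximal subalgebra avoiding $v$ must contain $L^{(\infty)}$ (here $L^{(\infty)}=B^{(\infty)}\neq 0$), after which $\F(x+v)$ with $x\in L^{(\infty)}\setminus\F v$ cannot be an intersection of maximal subalgebras. Your inductive hypothesis gives no such dichotomy, so as written the argument does not establish the statement over fields of positive characteristic.
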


\begin{proof} Let $L$ be dually atomistic and suppose that $L$ is not
  semisimple.  Then $Asoc (L) \neq 0$ and $L$ splits over $Asoc( L)$, by
 \cite[Theorem 7.3]{frat}.  Furthermore, the minimal abelian ideals of
  $L$ are one-dimensional, by  \cite[Lemma 1]{sch}, so we can write $L =
  (\F a_{1} \oplus \dots \oplus \F a_{n}) \dot{+} B$, where $\F a_{i}$ is a
  minimal ideal of $L$ for each $1 \leq i \leq n$, $B$ is a subalgebra
  of $L$, and $n \geq 1$.
  \par
  Let $M$ be a maximal subalgebra of $L$ with $a_{1} \not \in M$. We
  shall show that $L^{(\infty)} \subseteq M$. Now $L = \F a_{1} + M$, so
  $M$ has codimension one in $L$.  It follows that $L/M_{L}$ is as
  described in \cite[Theorems 3.1 and 3.2]{ama}.  Also,
  $[\F a_{1},M_{L}] \subseteq \F a_{1} \cap M = 0$.  We consider the three
  cases given in \cite[Theorem 3.1]{ama} separately. 
  \par
  \emph{Case (a)}: Here $L/M_{L}$ is one-dimensional, so $M =
  M_{L}$ and $L^{2} \subseteq M$.
  \par
  \emph{Case (b)}: Here $L/M_{L}$ is two-dimensional, so $L =
  \F a_{1} + \F m + M_{L}$ where $m \in M \setminus M_{L}$.  Now $L^{2}
  \subseteq \F a_{1} + M_{L}$ and $L^{(2)} \subseteq M_{L} \subseteq
  M$. 
  \par
  \emph{Case (c)}: Here $L/M_{L} \simeq L_{m}(\Gamma)$. If $m$ is
  odd then $L_{m}(\Gamma)$ is simple.  But $(\F a_{1} + M_{L})/M_{L}$ is
  a one-dimensional ideal of $L/M_{L}$, which is a contradiction.  Hence
  $m$ is even, in which case $L_{m}(\Gamma) = \F x + L_{m}(\Gamma)^{2}$, where
  $L_{m}(\Gamma)^{2}$  is simple.  Put $L/M_{L} = \bar{L}$, and so on.  Then
  $\bar{L} = \F  \bar{a_{1}} \oplus \bar{L}^{2}$ and $ [\bar{L},
  \bar{a_{1}}]  = \bar{0}$; that is, $ [L,a_{1}]  \subseteq M_{L}$, whence
  $L^{2} \subseteq M$.
  \par
  In any case we have established that, for any maximal subalgebra $M$
  of $L$, either $a_{1} \in M$ or $L^{(\infty)} \subseteq M$.  Suppose
  that $L^{(\infty)} \neq 0$.  Then $L^{(\infty)} \neq \F a_{1}$ (since
  $(\F a_{1})^{2} = 0$), so there is an element $x \in L^{(\infty)}
  \setminus \F a_{1}$.  Let $M$ be a maximal subalgebra containing $x +
  a_{1}$.  Then either $a_{1} \in M$ or $L^{(\infty)} \subseteq M$.
  In each case, $\F x + \F a_{1} \subseteq M$.  It follows that $\F (x +
  a_{1})$ cannot be an intersection of maximal subalgebras of $L$, a
  contradiction.  Hence, $L^{(\infty)} = 0$ and $L$ is solvable. The
  result now follows from \cite[Lemma 1]{sch}. 
\end{proof}

We shall need the following result which is due to Grunewald, Kunyavskii, Nikolova and Plotkin for $p>5$. However, the same proof works for $p>3$ by using the Corollary in page 180 of~\cite{skr2}. 
A. Premet has pointed out that the result is also valid for $p=3$, but that it relies on results that have not yet been published, so we omit this case.

\begin{lemma}\label{gknp} Every simple Lie algebra $L$ over an algebraically closed field $\F $ of characteristic $p>3$ contains a subalgebra $S$ with a  quotient isomorphic to $\mathfrak{sl}(2,\F )$.
\end{lemma}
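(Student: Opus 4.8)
The statement is a structural fact about simple Lie algebras, so one natural route is to invoke the classification of finite-dimensional simple Lie algebras over an algebraically closed field of characteristic $p>3$ (Block, Wilson, Strade and Premet): each such algebra is classical, of Cartan type ($W$, $S$, $H$ or $K$), or a Melikian algebra, the last possibility occurring only when $p=5$. Since $p>2$, the Lie algebra $\mathfrak{sl}(2,\F)$ is simple, so it suffices to produce, in each of these families, a subalgebra $S$ together with an ideal $I$ of $S$ for which $S/I\cong\mathfrak{sl}(2,\F)$.

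For a classical simple Lie algebra $L$ this is immediate: $L$ admits a root-space decomposition with respect to a maximal torus, and for any root $\alpha$ the elements $e_{\alpha}$, $h_{\alpha}:=[e_{\alpha},e_{-\alpha}]$ and $e_{-\alpha}$ span a subalgebra isomorphic to $\mathfrak{sl}(2,\F)$; here one uses $p>3$ to guarantee both $2\ne 0$ and $h_\alpha\ne 0$, the latter being a short direct check in the few small cases, such as $\mathfrak{psl}_n$ with $p\mid n$. Thus we may take $S\cong\mathfrak{sl}(2,\F)$ and $I=0$.

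Next consider the Cartan-type algebras. The restricted Witt algebra $W(1;\underline1)=\operatorname{Der}\bigl(\F[x]/(x^p)\bigr)$ already contains an $\mathfrak{sl}(2,\F)$-triple, namely suitable multiples of $\partial$, $x\partial$ and $x^{(2)}\partial$ (this uses $p>2$). For the other members of the series $W$, $S$, $H$, $K$ and their ``$C$''-variants one locates, inside each such algebra, either an explicit $\mathfrak{sl}(2,\F)$-subalgebra --- typically the copy of $\mathfrak{sl}(2,\F)$ sitting inside the degree-zero component of the natural $\mathbb{Z}$-grading, which is a classical linear Lie algebra of type $\mathfrak{gl}$, $\mathfrak{sl}$ or $\mathfrak{sp}$ acting on the degree $-1$ part --- or, in the low-rank cases where that component is too small, a subalgebra having $W(1;\underline1)$ as a quotient, which reduces matters to the previous case. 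Carrying this out uniformly across all four series, their simple subquotients and all admissible parameters $\underline n$, and in particular handling the residual characteristic-$5$ subtleties, is exactly what the Corollary on page~180 of \cite{skr2} supplies; this is precisely the ingredient that upgrades the Grunewald--Kunyavskii--Nikolova--Plotkin argument from $p>5$ to $p>3$. Finally, when $p=5$ and $L$ is a Melikian algebra, the degree-zero component $L_0$ of its natural grading is a subalgebra isomorphic to $\mathfrak{gl}(2,\F)$ and hence contains $\mathfrak{sl}(2,\F)$, so this case is of the same shape as the foregoing ones.

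The step I expect to be the main obstacle is the Cartan-type reduction just described: confirming, uniformly and with full attention to the characteristic-$5$ exceptions, that every Cartan-type or Melikian simple Lie algebra has a subalgebra with a quotient isomorphic to $\mathfrak{sl}(2,\F)$. This is the reason one appeals to \cite{skr2} rather than attempting a bare-hands case-by-case verification; and it is essentially for the same reason --- no published classification being available in that characteristic --- that the case $p=3$ is left aside, as remarked just before the statement.
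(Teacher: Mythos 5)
Your argument is viable, but it is not the route the paper takes: the paper's entire proof is one sentence, invoking the proof of \cite[Lemma 3.2]{gknp} verbatim with the reference to \cite[Part II, Corollary 1.4]{weis} (which requires $p>5$) replaced by the Corollary on page 180 of \cite{skr2} (which works for $p>3$). That argument is classification-free: for a non-classical simple algebra one takes a suitable maximal subalgebra, forms the associated filtration, and the Weisfeiler/Skryabin structure theorem for the associated graded algebra directly produces a subalgebra with a quotient mapping onto $\mathfrak{sl}(2,\F)$ or onto $W(1;\underline{1})$, the latter containing $\mathfrak{sl}(2,\F)$; pulling back along the quotient map gives the desired $S$. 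Your proposal instead runs through the Block--Wilson--Strade--Premet classification and checks each family, which is legitimate now that the classification for $p>3$ is published and buys explicitness, but two points need attention. First, you misattribute the role of Skryabin's corollary: it is not a uniform case check over the classified Cartan-type families but a structural statement about the graded algebra attached to a noncontractible filtration of an arbitrary simple Lie algebra --- precisely the device that lets Grunewald--Kunyavskii--Nikolova--Plotkin, and this paper, bypass the classification altogether (indeed, at the time those papers were written the classification for $p=5,7$ was not available). Second, the Cartan-type simple algebras include filtered, non-graded deformations (e.g.\ the Hamiltonian algebras $H(2r;\underline{n};\Phi)$), for which ``the degree-zero component of the natural $\mathbb{Z}$-grading'' must be replaced by $L_{(0)}/L_{(1)}$ for the natural filtration; the conclusion still follows by pulling the copy of $\mathfrak{sl}(2,\F)$ back to a subalgebra of $L_{(0)}$ with $I=L_{(1)}$, but as written your case analysis does not cover these algebras.
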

\begin{proof} The proof is the same as for \cite[Lemma 3.2]{gknp} with the reference to \cite[Part II,  Corollary 1.4]{weis} replaced  by~\cite[page 180, Corollary]{skr2}. 
\end{proof}


In what follows we shall be studying the lattice of restricted subalgebras of a restricted Lie algebra. 
Let $L$ be a restricted Lie algebra over a field of characteristic $p>0$. For a subset $S$ of $L$, we denote by $\langle S\rangle_p$ the restricted subalgebra generated by $S$. We say that $L$ is \emph{cyclic} if $L=\langle x \rangle_p$ for some $x\in L$. An element $x\in L$ is said to be \emph{semisimple} if $x\in \langle x^{[p]}\rangle_p$ and \emph{toral} if $x^{[p]}=x$. An abelian restricted Lie algebra consisting of semisimple elements is called a \emph{torus}. An element $x\in L$ is said to be \emph{$p$-nilpotent} if $x^{[p]^n}=0$ for some $n>0$ (in this case, the minimal $n$ with such a property is called the \emph{order} of $x$), and $L$ is said to be \emph{$p$-nilpotent} if there exists $n>0$ such that $x^{[p]^n}=0$ for every $x\in L$.  We also introduce as ``restricted analogues" of earlier concepts, $F_p (L)$, the {\em Frattini $p$-subalgebra} of $L$, to be the intersection of the maximal restricted subalgebras of $L$, and $\phi_p(L)$, the {\em Frattini
$p$-ideal} of $L$, to be the largest restricted ideal of $L$ which is contained in $F_p(L)$. The {\em abelian $p$-socle}, $A_psoc(L)$, is the sum of the minimal abelian restricted ideals of $L$.  To avoid tedious repetition we shall therefore often omit the word 'restricted'.


%
%
%
%
%

\section{Dually atomistic Lie algebras}\label{s:da}
We say that a restricted Lie algebra $L$ is {\em dually atomistic} if every restricted subalgebra of $L$ is an intersection of maximal restricted subalgebras of $L$. It is easy to see that if $L$ is dually atomistic then so is every factor algebra of $L$, and if $L$ is dually atomistic then it is $\phi_p$-free.  

\begin{lemma}\label{l:nilrad} Let $L$ be a dually atomistic restricted Lie algebra.   Then:
	\begin{enumerate}
		\item[(i)] $N(L)$ is abelian;
		\item[(ii)] $M \cap N(L)$ is a restricted ideal of $L$ for every maximal restricted subalgebra
		$M$ of $L$; and
		\item[(iii)] for every subspace $S$ of $N(L)$, $\langle S\rangle_p$ is a restricted ideal of $L$. 
	\end{enumerate}
\end{lemma}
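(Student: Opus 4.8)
The plan is to prove the three statements more or less in order, using the characterisation of $L/M_L$ afforded by the classification of almost-modular/codimension-one maximals (Theorems 3.1 and 3.2 of \cite{ama}), exactly as in the proof of Proposition~\ref{da}, and then to bootstrap from (ii) to (iii). For (i), suppose $N(L)$ is nonabelian. Since $L$ is dually atomistic, so is every factor algebra, and $L$ is $\phi_p$-free; in particular $\phi(L)=0$ in the ordinary sense on the relevant subquotients. I would first observe that $N=N(L)$ is a restricted ideal (the nilradical of a restricted Lie algebra is a $p$-ideal), and that $N^2$ is a proper nonzero restricted ideal. Pick a minimal abelian restricted ideal $A$ of $L$ contained in $Z(N)$; by \cite[Lemma 1]{sch} (or its restricted analogue obtained by the same argument) such minimal ideals are one-dimensional, say $A=\F a$ with $a^{[p]}=0$. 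Now choose $x\in N\setminus C_N(a)$ — such an $x$ exists because $N$ is nilpotent and nonabelian, so its action on the centre is not trivial on everything; more precisely, since $[N,N]\ne 0$ we can find $x,y\in N$ with $[x,y]\ne 0$, and by climbing the lower central series we may arrange $[x,y]\in A$ with $[x,A]=0$. Then $\langle y\rangle_p$ is a restricted subalgebra not containing $a$, and I claim $\langle y+a\rangle_p$ cannot be an intersection of maximals: any maximal $M$ containing $y+a$ must, by the codimension-one analysis, either contain $a$ (hence $y$, hence $[x,y]$…) or contain $L^{(\infty)}$, and in either case one forces both $y$ and $a$ into $M$, so $\langle y+a\rangle_p$ lies in no proper intersection separating it from $\langle y\rangle_p$. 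Chasing this contradiction is the crux: the main obstacle is making the case analysis on $M_L$ (via \cite{ama}) interact correctly with the restricted structure and the $p$-map on $a$ and $y$, since the restricted subalgebra generated by $y+a$ may be larger than $\F(y+a)$.

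For (ii), let $M$ be a maximal restricted subalgebra and set $N=N(L)$. I would show $[L, M\cap N]\subseteq M\cap N$. Since $N$ is abelian by (i), certainly $[N, M\cap N]=0\subseteq M\cap N$. It remains to handle $[x, M\cap N]$ for $x\notin N$. If $M\cap N$ is not an ideal, then $M+$ (its ideal closure) is strictly larger, so by maximality the ideal closure of $M\cap N$ in $L$ is not contained in $M$; one then produces an element $v\in N\setminus M$ and an element $w\in M\cap N$ with $[\ell, w]$ having a component along $v$ for some $\ell\in L$, and uses dual atomisticity applied to a cleverly chosen one-dimensional (or cyclic) restricted subalgebra — of the form $\langle w+v\rangle_p$ — to derive a contradiction, again via the fact that every maximal either swallows $v$ or swallows $L^{(\infty)}\supseteq$ (relevant commutators). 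The key point is that $M\cap N$ being $M$-invariant is automatic ($M\cap N$ is an ideal of the subalgebra $M$), so the only obstruction is invariance under a transversal element, and that is precisely what the atomisticity argument kills.

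Statement (iii) follows from (ii) by a short argument: given a subspace $S\subseteq N(L)$, I want $\langle S\rangle_p$ to be a restricted ideal of $L$. First, $\langle S\rangle_p\subseteq N(L)$ since $N(L)$ is a restricted subalgebra (indeed abelian by (i), so $\langle S\rangle_p = S + S^{[p]} + S^{[p]^2}+\cdots$, a subspace of $N(L)$). Being a restricted subalgebra of the dually atomistic $L$, $\langle S\rangle_p$ is an intersection of maximal restricted subalgebras, say $\langle S\rangle_p = \bigcap_{i} M_i$. Then
$$\langle S\rangle_p = \langle S\rangle_p \cap N(L) = \bigcap_i (M_i\cap N(L)),$$
and by (ii) each $M_i\cap N(L)$ is a restricted ideal of $L$; an intersection of restricted ideals is a restricted ideal, so $\langle S\rangle_p$ is a restricted ideal of $L$, as required.

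The hard part will be part (i): one must carefully engineer the elements $x,y,a$ inside the nilpotent ideal $N$ so that $[x,y]\in \F a$, $[x,a]=0$, and $y+a$ generates a restricted subalgebra whose every containing maximal is forced to contain both $y$ and $a$ — and the $p$-map makes this more delicate than in the classical (non-restricted) setting, since $\langle y+a\rangle_p$ need not be one-dimensional. Once (i) is in hand, (ii) is a variation on the same theme and (iii) is purely formal.
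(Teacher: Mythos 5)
Your part (iii) is exactly the paper's argument and is fine, but parts (i) and (ii) contain genuine gaps. For (i), you sketch an element-chasing contradiction (engineering $x,y,a\in N(L)$ with $[x,y]\in\F a$ and running the Amayo codimension-one analysis on maximal subalgebras through $y+a$), and you explicitly leave the crux unproved. The construction is also internally shaky: you first ask for $x\in N\setminus C_N(a)$ while having placed $a\in Z(N)$, which is vacuous, and ``climbing the lower central series'' does not let you force $[x,y]$ into a \emph{prescribed} minimal restricted ideal $A$. None of this machinery is needed. Since $L$ is dually atomistic, the zero subalgebra is an intersection of maximal restricted subalgebras, so $F_p(L)=0$ and hence $\phi_p(L)=0$; the restricted Frattini theory (\cite[Theorem 6.5]{frat} combined with \cite[Theorem 3.5]{lt1}) gives $N(L)^2\subseteq\phi_p(L)$, and (i) follows in one line.

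For (ii) you correctly isolate the issue --- $M\cap N(L)$ is automatically an ideal of $M$, so only invariance under a transversal element matters --- but you then propose yet another unfinished atomisticity contradiction. The step you are missing is elementary: if $N(L)\not\subseteq M$, then $N(L)+M$ is a restricted subalgebra properly containing the maximal $M$, so $L=N(L)+M$ and the transversal may be taken inside $N(L)$, which centralises $M\cap N(L)$ by (i). Explicitly, $[L,N(L)\cap M]=[N(L)+M,N(L)\cap M]\subseteq N(L)^2+\bigl(M^2\cap N(L)\bigr)\subseteq M\cap N(L)$. No appeal to dual atomisticity or to the classification of $L/M_L$ is needed anywhere in (i) or (ii); as written, your proposal does not constitute a proof of either part.
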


\begin{proof} (i) $N(L)^2\subseteq \phi_p(L) = 0$ by~\cite[Theorem 6.5]{frat} and~\cite[Theorem 3.5]{lt1}.
	
	(ii) The result is clear if $N(L) \subseteq M$, so suppose that
	$N(L) \not \subseteq M$.  Then $L = N(L) + M$ and 
	\begin{align*}
		[L,N(L) \cap M]  &= [N(L) + M,N(L) \cap M]\\
		&\subseteq  N(L)^{2} + N(L)\cap M^{2} 
		\subseteq  N(L) \cap M,
	\end{align*}
	using (i).
	
	(iii) By (i), every subspace of $N(L)$
	is a subalgebra of $L$.  Let $S$ be any subspace of $N(L)$.  Then
	$$
	\langle S\rangle_p  = \langle S\rangle_p \cap N(L)   = \left(\bigcap_{M \in \mathcal{M}} M\right) \cap N(L)  = \bigcap_{M \in \mathcal{M}} (M \cap N(L)),
	$$
	where $\mathcal{M}$ is the set consisting of all maximal restricted subalgebras of $L$
	containing $\langle S\rangle_p$.  Therefore, $\langle S\rangle_p$ is an intersection of restricted ideals of $L$,
	by (ii), and so is itself a restricted ideal of $L$.
\end{proof}
\chn

\begin{prop} Let $L$ be a dually atomistic restricted Lie algebra over an algebraically closed field $\F $. Then $L$ is solvable or semisimple.
\end{prop}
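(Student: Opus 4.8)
The plan is to combine the structure theory of the Frattini-free quotient with the solvable-radical decomposition for restricted Lie algebras. Suppose $L$ is dually atomistic. By the remarks at the beginning of this section, $L$ is $\phi_p$-free, so by the restricted analogue of the Frattini theory (\cite[Theorem 6.5]{frat}, \cite[Theorem 3.5]{lt1}), $L$ splits over its $p$-socle, and in particular $N(L)=A_psoc(L)$ is abelian by Lemma~\ref{l:nilrad}(i). Write $L = A \dot{+} B$, where $A = N(L)$ and $B$ is a restricted subalgebra with $B \cap N(L) = 0$; we want to force $A = 0$, which will give $N(L)=0$ and hence, since $R(L)$ is nilpotent modulo its action being unipotent on a Frattini-free algebra, $R(L)=0$, i.e. $L$ is semisimple; otherwise we must show $L$ is solvable.

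The key step is to analyse how $B$ acts on $A$. First I would reduce to the case $R(L) \ne 0$, $N(L) \ne 0$ and argue that it suffices to show $L$ is solvable in that case. Since $N(L)$ is abelian and $L$ is dually atomistic, Lemma~\ref{l:nilrad}(iii) tells us every subspace of $N(L)$ generates a restricted ideal; in particular, for any $0 \ne a \in A$, the restricted ideal $\langle \F a\rangle_p$ is contained in $A$ and is $B$-invariant. Decomposing $A$ as a restricted $B$-module and using that $\F$ is algebraically closed, I would look at a minimal restricted ideal $\F a$ inside $A$ (these are one-dimensional, as in the non-restricted case) and consider whether $L/C_L(\F a)$ can be semisimple and nonzero. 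The idea is that if $B$ projects onto something semisimple acting nontrivially on $A$, one produces — via Lemma~\ref{gknp} if $p>3$ — a copy of $\mathfrak{sl}(2,\F)$ acting on a sum of one-dimensional restricted ideals, and then a suitable diagonal element $x + a$ (with $x$ in the $\mathfrak{sl}(2)$-part, $a \in A$) fails to lie in an intersection of maximals, exactly mimicking the final contradiction in the proof of Proposition~\ref{da}. This forces the semisimple part of $L$ to act trivially on $N(L)$, but then $[L^{(\infty)}, N(L)] = 0$, and a second application of the dual-atomisticity argument to an element of $L^{(\infty)} \setminus N(L)$ (noting $L^{(\infty)} \cap N(L)$ is central) yields $L^{(\infty)} \subseteq N(L)$, whence $L^{(\infty)}$ is abelian and $L$ is solvable.

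The main obstacle I anticipate is the characteristic handling: Lemma~\ref{gknp} is only available for $p>3$, so the cases $p=2,3$ need a separate, more hands-on argument showing that a non-solvable non-semisimple dually atomistic $L$ cannot exist — presumably by exploiting that $L/R(L)$ is semisimple and acts completely reducibly on $N(L)$, together with the fact that one-dimensional restricted subalgebras are rather restrictive (not every line is a restricted subalgebra), to build the required element $x+a$ outside every maximal restricted subalgebra directly. A secondary technical point is to verify that $R(L) = N(L)$ under dual atomisticity, i.e. that the solvable radical is already nilpotent; this should follow because $R(L)/N(L)$ embeds in a torus acting on $N(L)$ and $\phi_p$-freeness plus dual atomisticity of the quotient rules out a nontrivial such torus, but the details of that reduction will need care.
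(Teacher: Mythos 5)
Your proposal is an outline rather than a proof, and the two places where you defer the work are exactly where the substance of the argument lies. First, the route through Lemma~\ref{gknp} cannot establish the statement as given: that lemma is only available for $p>3$, and you acknowledge that $p=2,3$ would ``need a separate, more hands-on argument'' which you do not supply. The paper's proof never invokes Lemma~\ref{gknp} at all and works uniformly in every positive characteristic; instead it splits into the cases $N(L)=Z(L)$ (where it shows $B\cong L/R(L)$ would have to be semisimple and then derives a contradiction, so this case cannot occur with $B\neq 0$) and $N(L)\neq Z(L)$ (where it produces a one-dimensional restricted ideal $\F a$ with $a^{[p]}=0$ avoiding the centre and concludes that $L$ is solvable). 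Your secondary reduction ``$R(L)=N(L)$ because $R(L)/N(L)$ embeds in a torus\dots the details will need care'' is likewise not carried out, and the paper only needs (and proves) this identification inside the case $N(L)=Z(L)$.

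Second, and more fundamentally, the ``diagonal element'' contradiction does not transfer verbatim from Proposition~\ref{da} to the restricted lattice, and this is the point your sketch glosses over. In the restricted setting $\F(x+a)$ is in general not a restricted subalgebra, so dual atomisticity only tells you that $\langle x+a\rangle_p$ is an intersection of maximal restricted subalgebras; if both $x$ and $a$ land in every such maximal subalgebra, what you obtain is the equality $\langle x+a\rangle_p=\langle x\rangle_p+\langle a\rangle_p$, and the contradiction (or the desired structural conclusion) has to be extracted from that equality. The paper does this by expanding $(x+a)^{[p]^i}$, comparing coefficients to deduce that certain elements must be semisimple, and then applying \cite[Chapter 2, Theorem 3.10]{sf} to conclude that $Z(L)$ or $\langle B^2\rangle_p$ is a torus (case $N(L)=Z(L)$, yielding a contradiction either way), respectively that $\langle L^{(\infty)}\rangle_p$ is abelian and hence $L$ is solvable (case $N(L)\neq Z(L)$). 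None of this mechanism appears in your proposal, so as written it does not constitute a proof.
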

\begin{proof} Suppose that $L$ is not semisimple. Then $L=N(L)\dot{+}B= A_1\oplus \cdots \oplus A_n\dot{+}B$, where $B$ is a restricted subalgebra of $L$ and $A_1\oplus \cdots \oplus A_n=A_psoc(L)\neq 0$, by \cite[Theorems  3.4  and 4.2]{lt1}.  If $B=0$, then $L$ is nilpotent and we are done. Assume therefore that $B\neq 0$. 
	\par
	
	Suppose first that $N(L)=Z(L)$. Then,   $L=Z(L)\oplus B$ and $L^2\subseteq B$.  Then we must have that $N(L)=R(L)$. 
	For, otherwise, there is a minimal ideal $A/N(L)$ of $L/N(L)$ with $A\subseteq R(L)$. But $A$ is nilpotent, which is a contradiction. Thus, $B$ is semisimple  and $Z(L)\neq 0$.  Let $M$ be a maximal restricted subalgebra of $L$. If $Z(L)$  is not contained in $M$ then $M+Z(L)$ is a restricted subalgebra properly containing $M$, so $L = M + Z(L)$ and $\langle B^2\rangle_p = \langle L^2\rangle_p \subseteq M$, since $L^2\subseteq M$ and $M$ is restricted. Hence, either $Z(L)$ or $\langle B^2\rangle_p$ is inside $M$. 
	\par
	
	Let $z \in Z(L)$ and $b \in \langle B^2\rangle_p$, and let $M$ be a maximal restricted subalgebra containing $\langle z + b\rangle_p$. Then $z, b \in M$, so we must have $\langle z\rangle_p + \langle b\rangle_p = \langle z + b\rangle_p$. But then $b = \sum_{i=0}^n \lambda_i (b^{[p]^i} + z^{[p]^i})$, so $b =\sum_{i=0}^n \lambda_i b^{[p]^i}$   and $\sum_{i=0}^n \lambda_i z^{[p]^i} = 0$. If $b$ is not semisimple, then $\lambda_0 = 1$ which implies that $z$ is semisimple, from the second sum. This must hold for every choice of $z \in Z(L)$, so $Z(L)$ is a torus of $L$,  by~\cite[Chapter 2, Theorem 3.10]{sf}. A similar argument shows that if $z$ is not semisimple then every $b$ must be, in which case $\langle B^2\rangle_p$ is a torus of $L$. Hence, either $Z(L)$ or $\langle B^2\rangle_p$ is a torus. In the latter case, $\langle B^2\rangle_p$ is abelian, contradicting the fact that $B$ is semisimple. In the former case, both $Z(L)$ and $\langle B^2\rangle_p$ have a toral element: $z$ and $b$, say. But then $\langle z\rangle_p+\langle b\rangle_p=\F z+\F b\neq \F (z+b)=\langle z+b\rangle_p$, a contradiction. 
	\par
	
	Therefore suppose that $N(L) \neq Z(L)$. Then there is a minimal restricted ideal $A$ with $A\subseteq N(L)$ and $A\cap Z(L)=0$. Moreover, if $a\in A$, we have that $a^{[p]}\in A\cap Z(L)$, so $A=\F a$ with $a^{[p]} =0$, by Lemma \ref{l:nilrad}(iii). Let $M$ be a maximal restricted subalgebra of $L$  such that $a\notin M$.  We have $L=M\dot{+}A$, by \cite[Lemma 2.1]{lt1}, so $M$ has codimension one in $L$, and, as in Proposition \ref{da}, $\langle L^{(\infty)}\rangle_p \subseteq M$. It follows that $\langle L^{(\infty)}\rangle_p\cap A=0$. Choose 
	$x \in \langle L^{(\infty)}\rangle_p$. Then $[x,a]\in L^{(\infty)}\cap A=0$. If $\langle x+a\rangle_p = \langle x\rangle_p+\langle a\rangle_p$, then we have $a = \sum_{i=0}^n\lambda_i(x + a)^{[p]^i}=\lambda_0a + \sum_{i=0}^n\lambda_ix^{[p]^i}$. Hence $\lambda_0=1$ and $x$ is semisimple. It follows  from~\cite[Chapter 2, Theorem 3.10]{sf}  that $\langle L^{(\infty)}\rangle_p$ is abelian. But this means that $L$ is solvable.
\end{proof}

For a field $\F $ of characteristic $p>0$, we will denote by $\F [t,\sigma]$ the skew polynomial ring over $\F $ in the indeterminate $t$ with respect to the Frobenius endomorphism $\sigma$ of $\F $. We recall  $\F [t,\sigma]$ is the ring consisting of all polynomials $f=\sum_{i\geq0}\alpha_i t^{i}$ with respect to the usual sum and multiplication defined by the condition $t\cdot\alpha=\alpha^p t$ for every $\alpha \in \F $.

\begin{prop}\label{p:solvable}  Let $L$ be a solvable restricted Lie algebra over
 any field $\F $.  If $L$ is dually atomistic then 
\[L\simeq (\mathcal{L}/\langle \bar{f_1}\rangle_p \oplus \cdots \mathcal{L}/\langle \bar{f_r}\rangle_p \oplus \F x_{r+1} \oplus \cdots \oplus \F x_n)\dot{+}\F b, \]
 where $r\geq 0$, but $r\neq n$, $b$ is toral, $\mathcal{L}=\langle x\rangle_p$ is a free cyclic restricted Lie algebra and $\bar{f_i} = \sum_{k= 0}^{s} \alpha_k x^{{[p]}k}$ is an element of $\mathcal{L}$ such that
$f_i= \sum_{k= 0}^s \alpha_k t^k$ is an irreducible element of the ring $\F [t,\sigma]$.
\end{prop}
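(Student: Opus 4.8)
The plan is to determine, one at a time, the nilradical $N(L)$, the shape of its minimal restricted ideal summands, and the complement of $N(L)$ in the Frattini splitting.

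Since $L$ is dually atomistic it is $\phi_p$-free, so --- exactly as in the proof of the preceding Proposition --- we may write $L=N(L)\dot{+}B$ with $N(L)=A_psoc(L)$ abelian, $B$ a restricted subalgebra, and $N(L)=A_1\oplus\cdots\oplus A_n$ a direct sum of minimal restricted ideals of $L$. First I would describe the summands $A_i$. By Lemma~\ref{l:nilrad}(iii), every restricted subalgebra contained in $N(L)$ is already a restricted ideal of $L$, so each minimal restricted ideal $A_i$ has no proper nonzero restricted subalgebra at all. As $A_i\subseteq N(L)$ is abelian, its $[p]$-operation turns it into a left module over the skew polynomial ring $\F[t,\sigma]$ (with $t$ acting as $v\mapsto v^{[p]}$) whose restricted subalgebras are precisely its $\F[t,\sigma]$-submodules; hence $A_i$ is a \emph{simple} $\F[t,\sigma]$-module. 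Since $\F[t,\sigma]$ is a principal ideal domain, $A_i\cong\F[t,\sigma]/\F[t,\sigma]f_i\cong\mathcal{L}/\langle\bar{f_i}\rangle_p$ for an irreducible $f_i$, with $\dim_\F A_i=\deg f_i$; renumber so that $\deg f_i\ge2$ for $i\le r$ and $\deg f_i=1$ (so $A_i=\F x_i$) for $i>r$.

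The heart of the matter is the complement $B\cong L/N(L)$. The key identity is that, $N(L)$ being abelian, $[w^{[p]},b]=(\ad w)^{p}(b)=(\ad w)^{p-1}([w,b])=0$ for all $b\in L$ and $w\in N(L)$ (using $\ad(w^{[p]})=(\ad w)^p$ and $[w,[w,b]]=0$): every $b$ annihilates the $p$-th powers of $N(L)$. Since the $\F$-span $A_i^{(1)}$ of $\{v^{[p]}:v\in A_i\}$ is again a restricted subalgebra inside $N(L)$, hence a restricted ideal, hence $0$ or $A_i$, and it equals $A_i$ unless $f_i=t$ (when $f_i$ has nonzero constant term the $[p]$-map on $A_i$ is a bijection), $B$ acts trivially on every $A_i$ with $f_i\ne t$ and only through $\F$-linear ``restricted characters'' $\chi^{(i)}\colon B\to\F$ on the summands with $f_i=t$. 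Running the same machinery inside $B$ (again dually atomistic, so Lemma~\ref{l:nilrad}(iii) applies): a minimal restricted ideal of $B$ which is $p$-nilpotent or of $\F$-dimension $\ge2$ must centralise $N(L)$ --- in the latter case because $\operatorname{Hom}_{\F[t,\sigma]}$ from such a simple module into the one-dimensional Frobenius module $\F$ vanishes --- so adjoining it to $N(L)$ would give an abelian restricted ideal of $L$ strictly larger than the nilradical, which is impossible; and a toral line in $B$ is central in $B$. Hence $N(B)=Z(B)$ is a central torus, $B=Z(B)\oplus B'$, and iterating forces $B'=0$, so $B$ is a torus. Next I would prove $\dim B\le1$. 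We have $\bigcap_i\ker\chi^{(i)}=0$ (a common zero $b\ne0$ would centralise $N(L)$, hence lie in $Z(L)\subseteq N(L)$, against $b\notin N(L)$), so if $\dim B\ge2$ there are $i\ne j$ with $\chi^{(i)},\chi^{(j)}$ linearly independent. Then $\langle x_i+x_j\rangle_p=\F(x_i+x_j)$ (as $x_i^{[p]}=x_j^{[p]}=0$), yet any maximal restricted subalgebra $M\ni x_i+x_j$ contains $x_i$ and $x_j$: $M$ has codimension one (since $\F x_i$ is a restricted ideal, $M+\F x_i$ is a restricted subalgebra), and analysing $L/M_L$ by~\cite[Theorems 3.1 and 3.2]{ama} --- as in Proposition~\ref{da}, $L/M_L$ is either one-dimensional or two-dimensional non-abelian --- forces $[b,x_i+x_j]$ into $M$ for every $b\in B$, which together with the independence of $\chi^{(i)},\chi^{(j)}$ yields $x_i\in M$. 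Thus $\F x_i+\F x_j$ lies in every maximal subalgebra through $x_i+x_j$, contradicting that $\F(x_i+x_j)$ is an intersection of maximals. Hence $B=\F b$ with $b$ semisimple.

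Finally, if $B\ne0$ then $b$ acts nontrivially on $N(L)$ --- otherwise $\F b$ would be a restricted ideal, forcing $\F b\subseteq A_psoc(L)=N(L)$ --- so $\chi^{(i)}(b)\ne0$ for some $i$ with $f_i=t$; the relation $\chi^{(i)}(b^{[p]})=\chi^{(i)}(b)^p$ then shows that replacing $b$ by $\chi^{(i)}(b)^{-1}b$ makes it toral, and the presence of the summand $\F x_i$ gives $n>r$, i.e.\ $r\ne n$. Assembling the pieces yields the stated isomorphism. I expect the bound $\dim B\le1$ to be the main obstacle: it is precisely there that dual atomisticity, rather than mere $\phi_p$-freeness, is indispensable, and it leans on the classification of codimension-one maximal subalgebras; by contrast, identifying the $A_i$ and reducing $B$ to a torus are comparatively routine given the $\F[t,\sigma]$-module dictionary and the cited splitting results.
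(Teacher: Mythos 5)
Your argument is correct and arrives at the stated structure, but it takes a genuinely different route through the one step that really uses dual atomisticity beyond $\phi_p$-freeness, namely that $B$ is one-dimensional. The opening coincides with the paper's: the splitting $L=N(L)\dot{+}B$ with $N(L)=A_psoc(L)$, the identification of the minimal summands as simple $\F[t,\sigma]$-modules $\mathcal{L}/\langle\bar{f_i}\rangle_p$ via Lemma~\ref{l:nilrad}(iii) and \cite[Proposition 3.1]{mps}, and the observation that $a^{[p]}\in Z(L)$ for all $a\in N(L)$, which confines the action of $B$ to one-dimensional summands with trivial $[p]$-map, acting through restricted characters $\chi^{(i)}$. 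From here the paper is much shorter: it asserts that $C_B(x_i)$ is a restricted ideal of $L$, hence zero, so that any two elements of $B$ act as nonzero scalars on $x_n$ and are proportional; torality of $b$ then drops out of $[b^{[p]},x_n]=\ad^p(b)(x_n)=x_n$, with no need to know beforehand that $B$ is a torus --- so your intermediate step proving $B$ is a torus, while correct, is redundant. Your replacement for the centraliser step, on the other hand, buys something real: you rule out $\dim B\geq 2$ by exhibiting, for linearly independent characters $\chi^{(i)},\chi^{(j)}$, the one-dimensional restricted subalgebra $\F(x_i+x_j)$ and showing it cannot be an intersection of maximal restricted subalgebras, using the classification of $L/M_L$ from \cite[Theorems 3.1 and 3.2]{ama} exactly as in Proposition~\ref{da}. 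This avoids the assertion that $C_B(x_i)$ is an ideal of $L$, which would require $[N(L),C_B(x_i)]=0$ and is therefore essentially equivalent to the conclusion $C_B(x_i)=0$ it is invoked to justify; your detour is a self-contained substitute. The one place your write-up is too compressed is case (b) of the Amayo analysis: there one only gets $L^2\subseteq\F x_i+M_L$, not $L^2\subseteq M$, so $[b,x_i+x_j]\in M$ is not immediate. One must first note that $x_j\in L^2$ gives $x_j\equiv\mu x_i\pmod{M_L}$, that $x_i+x_j\in M$ together with $x_i\notin M$ forces $\mu=-1$ and hence $x_i+x_j\in M_L$, and then comparing the membership $[b,x_i+x_j]\in M_L$ with the expansion $\chi^{(i)}(b)x_i+\chi^{(j)}(b)x_j$ yields $\chi^{(i)}=\chi^{(j)}$, the required contradiction. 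With that detail supplied, your proof is complete.
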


\begin{proof} 
The nilradical $N(L)$ of $L$ is  non-zero and  abelian by Lemma~\ref{l:nilrad}(i). As $L$ is $\phi_p$-free, $L=N(L)\dot{+} B$ for some restricted subalgebra $B$ of $L$, and $N(L) = A_psoc(L):= A$, by \cite[Theorems 3.4 and 4.2]{lt1}. Let $a\in A$. Then  $C_B(A)$  is a restricted ideal of $L$ and $C_B(A)\cap  A  =0$, so $C_B(A)=0$ and $B$ acts faithfully on $A$. Also $\ad^2(a)=0$ and so $\ad(a^{[p]})=0$, whence $a^{[p]}\in Z(L)$ for all $a\in A$.
\par

We can write $A=A_1\oplus \dots \oplus A_n$, where $A_i$ is a minimal abelian restricted ideal of $L$ for $1\leq i\leq n$. Moreover, $A_i \simeq \mathcal{L}/\langle \bar{f_i}\rangle_p$, where $\bar{f_i} = \sum_{k\geq 0} \alpha_k x^{{[p]}k}$ is an element of $\mathcal{L}$ such that
$f_i= \sum_{k\geq 0} \alpha_k t^k$ is an irreducible element of the ring $\F [t,\sigma]$, by Lemma \ref{l:nilrad}(iii) and \cite[Proposition 3.1]{mps}. Let $A_1\oplus \dots \oplus A_r=Z(L)$, where we allow that $r$ could be $0$. Since $B$ acts faithfully on $A$ we cannot have $r=n$. Then $[B,A]=A_{r+1}\oplus \dots \oplus A_n=\F x_{r+1}\oplus \dots \oplus \F x_n$. Now $C_B(x_i)$ is a restricted ideal of $L$, so $C_B(x_i)=0$ for each $r+1\leq i\leq n$. Let $b_1,b_2\in B$. Then $[b_i,x_n]=\lambda_i x_n$ for some $0\neq \lambda_i\in \F $, $i=1,2$. But then $[\lambda_2b_1-\lambda_1b_2,x_n]=0$, whence $b_1$ and $b_2$ are linearly dependent and $B$ is one-dimensional. Choose $B=\F b$ such that $[b,x_n]=x_n$.  Let $b^{[p]}=\mu b$. Then
\[ x_n=[b^{[p]},x_n]=\mu[b,x_n]=\mu x_n,
\] so $\mu=1$ and $b$ is toral.
\end{proof}




We introduce another piece of notation before presenting the following results. We say that a Lie algebra is {\it restricted dually atomistic} if it is restricted and every subalgebra is an intersection of maximal subalgebras.



\begin{prop}\label{p:perfect}
Let $L$ be a perfect restricted dually atomistic Lie algebra over any field $\F $ of characteristic $p>0$. Then every subalgebra of $L$ is restricted.
\end{prop}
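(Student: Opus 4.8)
The plan is to exploit the gap between one-dimensional subalgebras and one-dimensional \emph{restricted} subalgebras, which is exactly where the restricted hypothesis does its work. Suppose, for contradiction, that $L$ has a subalgebra which is not restricted. Since a subalgebra $H$ fails to be restricted precisely when there is some $x\in H$ with $x^{[p]}\notin H$, by taking a suitable element we may reduce to the existence of an element $x\in L$ such that $\langle x\rangle_p\supsetneq \F x$, i.e.\ $x$ does not span a one-dimensional restricted subalgebra; equivalently $\F x$ itself is a non-restricted subalgebra. The key point is that $\F x$, being non-restricted, is certainly not an intersection of maximal restricted subalgebras in any naive way — but we must be careful, since the dual atomism hypothesis here is about \emph{all} subalgebras, not just restricted ones. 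So instead I would argue: $\F x$ is an intersection of maximal (not necessarily restricted) subalgebras by hypothesis, and I want to derive structural information about $L$ that contradicts perfectness.

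First I would pin down what $x$ looks like. Write $y=x^{[p]}$. Since $L$ is finite-dimensional and $L=L^2$ is perfect, by a result such as \cite[Theorem 6.5]{frat} together with \cite[Theorem 3.5]{lt1} (used already in Lemma~\ref{l:nilrad}) we know $\phi_p(L)=0$, and more relevantly the nilradical structure is constrained. Actually the cleaner route: a perfect Lie algebra has $R(L)\subseteq \phi(L)$ in many settings, but over a general field I would instead use that $L$ restricted dually atomistic forces, via Proposition~\ref{da} applied to $L$ as an ordinary Lie algebra, that $L$ is abelian, almost abelian, or semisimple; since $L$ is perfect it cannot be abelian (then $L^2=0\ne L$) and cannot be almost abelian (an almost abelian algebra $\F x\dot+A$ has $L^2=A\subsetneq L$), so $L$ is semisimple. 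Then I would decompose $L=L_1\oplus\cdots\oplus L_k$ into simple ideals and reduce the whole problem to the case $L$ simple: a subalgebra of a direct sum that projects onto a non-restricted subalgebra of some factor gives a non-restricted subalgebra there, and restrictedness in a factor is the same as in the whole (the $[p]$-operation respects the decomposition). So it suffices to show every subalgebra of a simple restricted dually atomistic Lie algebra is restricted.

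For the simple case, the heart of the matter is to show every element $x$ of a simple restricted Lie algebra $L$ which is restricted dually atomistic satisfies $x^{[p]}\in\F x$ fails only if something breaks. Here I would take $x$ with $\F x$ non-restricted and consider the subalgebra $\F x$; since it is an intersection of maximal subalgebras, and maximal subalgebras of a simple Lie algebra have trivial core, this constrains $\F x$ but does not immediately contradict anything for ordinary subalgebras. The trick, I expect, is the \emph{restricted} version of the argument used in the proof of the preceding Proposition: pick a toral-versus-nilpotent dichotomy. Decompose $x=x_s+x_n$ into its semisimple and $p$-nilpotent parts (the Jordan–Chevalley decomposition in a restricted Lie algebra, valid since $L$ acts faithfully on itself via $\ad$ and is restricted). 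If $x$ is semisimple then $x\in\langle x^{[p]}\rangle_p$ which is itself $\F x^{[p]}+\F x^{[p]^2}+\cdots$; if this lies in $\F x$ we are fine, otherwise $\langle x\rangle_p$ is a torus of dimension $\ge 2$, and I would look for two linearly independent toral elements $t_1,t_2\in\langle x\rangle_p$ and note $\langle t_1+t_2\rangle_p=\F(t_1+t_2)$ while $\langle t_1\rangle_p+\langle t_2\rangle_p$ is two-dimensional — but now I need a maximal \emph{ordinary} subalgebra separating $t_1+t_2$, contradiction as in the end of the previous proof. If $x$ is $p$-nilpotent, then $\langle x\rangle_p=\F x+\F x^{[p]}+\cdots$ is a nilpotent subalgebra, and I would again produce an element $z=x+x^{[p]}$ (assuming $x^{[p]}\ne 0$) with $\langle z\rangle_p$ strictly bigger than $\F z$, and run the intersection-of-maximals argument to force $x$ and $x^{[p]}$ both into any maximal containing $z$, giving the familiar contradiction $\langle z\rangle_p=\langle x\rangle_p+\langle x^{[p]}\rangle_p\ne\F z$.

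The main obstacle, I expect, is the bookkeeping in the mixed case $x=x_s+x_n$ with both parts nonzero, and more seriously the fact that the dual atomism here applies to ordinary subalgebras so one cannot simply quote the restricted-subalgebra arguments verbatim — one must check at each step that the separating subalgebra produced is a genuine maximal subalgebra of $L$ and that the element being separated genuinely fails to lie in the span of smaller maximals. I would handle this by first establishing the reduction to $L$ semisimple (hence a direct sum of simples) cleanly using Proposition~\ref{da}, then in each simple summand showing that a non-restricted $\F x$ would, by perfectness and simplicity, force $x$ to have a nontrivial $[p]$-tower inside a proper subalgebra, and finally invoking the element $x+x^{[p]}$ (or $t_1+t_2$) trick: the line $\F(x+x^{[p]})$ cannot be written as an intersection of maximal subalgebras because every maximal subalgebra containing $x+x^{[p]}$ must (being a subalgebra, hence closed under the bracket, and, since $x$ acts on it and $x^{[p]}=\ad(x)^{p}$ up to the $p$-operation\footnote{more precisely via the identity relating $\mathrm{ad}(x^{[p]})$ and $\mathrm{ad}(x)^p$} normalizes it) in fact contain both $x$ and $x^{[p]}$ — which then contains $\F x$ properly, contradicting that $\F(x+x^{[p]})$, being one-dimensional, must equal such an intersection.
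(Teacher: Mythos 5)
There is a genuine gap, and it sits exactly at the point your proof leans on hardest. Your concluding ``trick'' asserts that every maximal subalgebra $M$ containing $x+x^{[p]}$ must contain $x$ and $x^{[p]}$ separately; the justification offered (that $M$ is closed under the bracket and that $\ad(x)^p$ ``normalizes it'') does not yield this. From $x+x^{[p]}\in M$ and closure under the bracket you cannot extract $x$; what you would actually need is that $M$ is closed under the $p$-map, so that $(x+x^{[p]})^{[p]}=x^{[p]}+x^{[p]^2}\in M$ and one can peel off the tower. In other words, the step you are missing is precisely the statement that \emph{maximal subalgebras of $L$ are restricted} --- and that is the entire content of the paper's proof, not something you may assume. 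The paper gets it cheaply from perfectness: a maximal subalgebra that is an ideal would give a one-dimensional quotient, contradicting $L=L^2$, so every maximal subalgebra is self-idealising; by \cite[Lemma 3.9]{lt1} a self-idealising maximal subalgebra is restricted; and then dual atomism immediately makes every subalgebra an intersection of restricted subalgebras, hence restricted. No structure theory is needed at all.

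Two further steps in your reduction are also unsound. First, after correctly invoking Proposition~\ref{da} to conclude that $L$ is semisimple, you decompose $L$ as a direct sum of simple ideals; in characteristic $p$ a semisimple Lie algebra (one with no nonzero abelian ideal) need \emph{not} be a direct sum of simple ideals, so this reduction fails. Second, you use the Jordan--Chevalley decomposition $x=x_s+x_n$, which in restricted Lie algebras requires the ground field to be perfect (\cite[Chapter 2, Theorem 3.5]{sf}), whereas the proposition is stated over an arbitrary field of characteristic $p>0$. Both issues evaporate once you follow the maximal-subalgebras-are-restricted route, which works over any field.
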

\begin{proof} Arguing as in \cite[Lemma 3.7]{lt1}, it is immediate to prove that every maximal subalgebra of $L$ is self-idealising. It follows from \cite[Lemma 3.9]{lt1} that every maximal subalgebra of $L$ is restricted. The result now follows from the fact that $L$ is dually atomistic.   
\end{proof}


\begin{theor} There are no perfect restricted dually atomistic Lie algebras over an algebraically closed field.
\end{theor}
\begin{proof} Suppose that $L$ is a counterexample of minimal dimension. By Proposition~\ref{p:perfect}, $L$ is simple as a Lie algebra, and hence its absolute toral rank is just the dimension of a maximal torus $T$ (cf. \cite[\S 1.2]{strade}). Given two linearly independent elements $x,y\in T$, Proposition~\ref{p:perfect} forces $0\neq (x+\lambda y)^{[p]}\in \F  (x+\lambda y)$  for all $\lambda\in \F $, but this cannot happen since $\F $ is algebraically closed. Hence, $L$ has absolute toral rank $1$.
	
 Now, if $\F $ has characteristic $p=2,3$, then \cite[Theorem 6.5]{skr} yields that $L$ is solvable or isomorphic to $\mathfrak{sl}(2,\F )$ or to $\mathfrak{psl}(3,\F )$. Otherwise, $L$ has a restricted subalgebra with a quotient isomorphic to $\mathfrak{sl}(2,\F )$, by Lemma \ref{gknp} and Proposition~\ref{p:perfect}.  But both $\mathfrak{sl}(2,\F )$ and $\mathfrak{psl}(3,\F )$ have elements which are neither semisimple nor $p$-nilpotent, which clearly contradicts Proposition~\ref{p:perfect}.
\end{proof}


As well as the three-dimensional non-split simple Lie algebra, which is dually atomistic in the characteristic zero case, there exist other perfect dually atomistic simple restricted Lie algebras over a perfect field which is not algebraically closed. For example, let $L$ be the seven-dimensional simple Lie algebra over a perfect field of characteristic 3 constructed by Gein in \cite[Example 2]{gein}. 
This algebra $L$ can be endowed with a $[p]$-mapping such that every element is semisimple. Any two linearly independent elements of $L$ generate a three-dimensional non-split restricted subalgebra which is maximal in $L$. Any second-maximal restricted subalgebra is then one-dimensional, and every one-dimensional restricted subalgebra $S$ is inside more than one maximal restricted subalgebra whose intersection is $S$.

We finish this section by studying the so-called {\it atomistic} restricted Lie algebras, those in which every restricted subalgebra is generated by minimal restricted subalgebras.

\begin{prop}\label{prop:atom}
	Let $\F $ be an algebraically closed field of characteristic $p>0$. A restricted Lie algebra $L$ over $\F $ is atomistic if and only if every $p$-nilpotent cyclic restricted subalgebra is one-dimensional.
\end{prop}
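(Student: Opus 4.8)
The plan is to reduce atomisticity to a condition on cyclic restricted subalgebras and then describe those completely by means of the $[p]$-decomposition. First I would note that every restricted subalgebra $S$ satisfies $S=\langle \langle x\rangle_p : x\in S\rangle_p$, so $L$ is atomistic if and only if every \emph{cyclic} restricted subalgebra $C=\langle x\rangle_p$ is generated by the minimal restricted subalgebras contained in it. Next I would recall that a cyclic restricted subalgebra is abelian and admits the Jordan--Chevalley (i.e.\ $[p]$-)decomposition $x=x_s+x_n$, with $x_s,x_n\in C$, $x_s$ semisimple, $x_n$ $p$-nilpotent and $[x_s,x_n]=0$; hence $C=T\oplus P$ as restricted Lie algebras, where $T=\langle x_s\rangle_p$ is a torus and $P=\langle x_n\rangle_p$ is a $p$-nilpotent cyclic restricted subalgebra. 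I would also record that over an algebraically closed field every minimal restricted subalgebra is one-dimensional: such a subalgebra is cyclic, hence of the form $T$ or $P$ above with the other summand zero, and a torus of dimension $\geq 2$ contains a proper toral line while a $p$-nilpotent cyclic restricted subalgebra of dimension $\geq 2$ contains the proper nonzero restricted subalgebra $\langle x_n^{[p]}\rangle_p$.

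The core step is to identify the minimal restricted subalgebras of $C=T\oplus P$ and the restricted subalgebra they generate. Writing a one-dimensional restricted subalgebra as $\F z$ with $z=s+n$, $s\in T$, $n\in P$, the relation $z^{[p]}\in\F z$ forces $s^{[p]}=\mu s$ and $n^{[p]}=\mu n$ for a common scalar $\mu$; if $s\neq 0$ then $\mu\neq 0$ (since $s$ is semisimple) and then $n=0$ (since $n$ is $p$-nilpotent), so every one-dimensional restricted subalgebra of $C$ lies entirely in $T$ or in $P$. Over an algebraically closed field $T$ has a basis of toral elements and is therefore generated by its minimal restricted subalgebras (see \cite{sf}); on the other hand, if $x_n^{[p]^m}=0$ with $x_n^{[p]^{m-1}}\neq 0$, then on the basis $x_n,x_n^{[p]},\dots,x_n^{[p]^{m-1}}$ of $P$ the operator $[p]$ acts as a $p$-semilinear shift, so the only one-dimensional restricted subalgebra of $P$ is $\F x_n^{[p]^{m-1}}$. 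Consequently the minimal restricted subalgebras of $C$ generate $T\oplus\F x_n^{[p]^{m-1}}$, which equals $C$ exactly when $\dim P\leq 1$, that is, when $x_n^{[p]}=0$.

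Combining these observations, $L$ is atomistic if and only if $x_n^{[p]}=0$ for every $x\in L$, and this is equivalent to the assertion that every $p$-nilpotent cyclic restricted subalgebra is one-dimensional: for the forward implication, $\langle x_n\rangle_p$ is a $p$-nilpotent cyclic restricted subalgebra for each $x$, so the hypothesis gives $x_n^{[p]}=0$; conversely, if $\langle y\rangle_p$ is $p$-nilpotent then $y_s=0$, hence $y=y_n$ and $y^{[p]}=0$, so $\langle y\rangle_p=\F y$ is one-dimensional. I expect the main obstacle to be the structural bookkeeping rather than any single deep point: checking that $C$ splits as $T\oplus P$ with both summands restricted, ruling out ``mixed'' one-dimensional restricted subalgebras in $T\oplus P$, and --- this being precisely where algebraic closure enters --- invoking correctly that a torus over an algebraically closed field is spanned by toral elements; the remaining computations with the $p$-semilinear operator $[p]$ on a cyclic restricted subalgebra are routine.
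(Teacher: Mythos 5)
Your proof is correct and follows essentially the same route as the paper: reduce atomisticity to cyclic restricted subalgebras, split $C=T\oplus P$ via the Jordan--Chevalley decomposition of \cite[Chapter 2, Theorem 3.5]{sf}, handle the torus part with \cite[Chapter 2, Theorem 3.6]{sf}, and reduce the whole question to $\dim P\leq 1$. The only difference is that where the paper cites \cite[Theorem 3.8]{mps} for the fact that a $p$-nilpotent cyclic restricted subalgebra is atomistic exactly when it is one-dimensional, you verify this directly via the $p$-semilinear shift computation, and you also make explicit the exclusion of ``mixed'' one-dimensional restricted subalgebras of $T\oplus P$, a point the paper leaves implicit.
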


\begin{proof}
	Note that $L$ is atomistic if and only if all its cyclic restricted subalgebras are atomistic. Consider the cyclic restricted subalgebra $C$, whose semisimple elements form a torus $T$, and whose $p$-nilpotent elements form a $p$-nilpotent restricted subalgebra $P$. By~\cite[Chapter 2, Theorem 3.6]{sf}, $T$ is atomistic. From~\cite[Chapter 2, Theorem 3.5]{sf}, it follows that $C=T\oplus P$, so 
	$C$ is atomistic if and only if $P$ is atomistic. But this is equivalent to requiring that $\dim P =1$ (\cite[Theorem 3.8]{mps}). 
	The result follows.
\end{proof}

\section{Restricted quasi-ideals }
 A restricted subalgebra $S$ of $L$ is called a {\em restricted quasi-ideal} of $L$ if $[S,H]\subseteq S+H$ for all restricted subalgebras $H$ of $L$. Clearly, every restricted subalgebra that is a quasi-ideal is also a restricted quasi-ideal.
\par

Denote by $L^{[p]}$ the restricted subalgebra generated by all the elements $x^{[p]}$, with $x\in L$.

\begin{lemma} If $S$ is a restricted subalgebra of $L$, then $S_L$ is a restricted ideal of $L$
\end{lemma}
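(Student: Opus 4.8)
The plan is to show that the core $S_L$ is closed under the $[p]$-operation; since $S_L$ is by definition an ideal of $L$, this will give at once that it is a restricted ideal. Write $I=S_L$ and let $\tilde I$ be the subspace $I+U$, where $U$ is the $\F$-span of $\{a^{[p]}:a\in I\}$. I would prove three things: $(\mathrm{i})$ $\tilde I$ is an ideal of $L$; $(\mathrm{ii})$ $\tilde I\subseteq S$; and $(\mathrm{iii})$ these two facts force $\tilde I=I$, whence $a^{[p]}\in I$ for every $a\in I$.

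For $(\mathrm{i})$ the key point is the defining identity $\ad(a^{[p]})=(\ad a)^p$ of a restricted Lie algebra. If $a\in I$ then, since $I$ is an ideal, $(\ad a)(L)=[a,L]\subseteq I$, and each further application of $\ad a$ keeps us inside $I$; hence $(\ad a)^p(L)\subseteq I$, that is, $[a^{[p]},L]\subseteq I$. Together with $[I,L]\subseteq I$ and bilinearity of the bracket, this yields $[\tilde I,L]\subseteq I\subseteq\tilde I$, so $\tilde I$ is a subalgebra and in fact an ideal of $L$. Note that there is no need to analyse $(a+b)^{[p]}$ here: $\tilde I$ is being shown to be an ordinary ideal, not a restricted one. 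This verification is the only place where the restricted structure enters, and I expect it to be the main (indeed essentially the only) obstacle; the rest is formal.

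For $(\mathrm{ii})$, $I=S_L\subseteq S$ and $S$ is a restricted subalgebra, so $a^{[p]}\in S$ for each $a\in I$; as $S$ is a subspace, $U\subseteq S$, hence $\tilde I=I+U\subseteq S$. For $(\mathrm{iii})$, $\tilde I$ is, by $(\mathrm{i})$, an ideal of $L$ contained in $S$, so by the definition of the core $\tilde I\subseteq S_L=I$; since obviously $I\subseteq\tilde I$, we get $\tilde I=I$, and in particular $a^{[p]}\in I$ for all $a\in I$. Thus $I=S_L$ is an ideal of $L$ that is closed under the $[p]$-operation, i.e. a restricted ideal, which completes the proof.
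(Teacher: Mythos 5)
Your proof is correct and is essentially the paper's own argument written out in full detail: the paper's one-line proof (``note that $(S_L)^{[p]}$ is an ideal of $L$ inside $S$'') is exactly your observation that $S_L$ together with the $p$-th powers of its elements is an ideal of $L$ contained in $S$, which by maximality of the core forces $a^{[p]}\in S_L$ for every $a\in S_L$. The key step $[a^{[p]},L]=(\ad a)^p(L)\subseteq S_L$ is the same in both.
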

\begin{proof} Simply note that $(S_L)^{[p]}$ is an ideal of $L$ inside $S$.
\end{proof}
\begin{prop}\label{p:qii} If $\F $ is perfect then $L^{[p]}$ is a restricted quasi-ideal if and only if it is an ideal of $L$.
\end{prop}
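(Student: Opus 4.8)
The plan is to prove the two implications separately; the forward one is trivial, and essentially all the work lies in showing that a restricted quasi-ideal of the special form $L^{[p]}$ must in fact be an ideal.

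If $L^{[p]}$ is an ideal, then $[L^{[p]},H]\subseteq L^{[p]}\subseteq L^{[p]}+H$ for every restricted subalgebra $H$, so $L^{[p]}$ is a restricted quasi-ideal. For the converse put $Q=L^{[p]}$, assume $Q$ is a restricted quasi-ideal, and aim for $[Q,L]\subseteq Q$; we may assume $Q\neq L$, the other case being vacuous. The first step is the observation that for each $x\in L$ one has $\langle x\rangle_p\subseteq \F x+Q$, since $x^{[p]^k}\in Q$ for all $k\geq 1$; hence $Q+\langle x\rangle_p=Q+\F x$, and applying the quasi-ideal hypothesis with $H=\langle x\rangle_p$ gives $[Q,x]\subseteq Q+\F x$ for \emph{every single} $x\in L$. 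Therefore, for $q\in Q$ the operator $\ad q$ stabilises $Q$ and each subspace $Q+\F x$, so it acts as a scalar on each line of $L/Q$; comparing its effect on $x$, $y$ and $x+y$ shows this scalar is independent of the line, so there is a linear functional $\chi\colon Q\to\F$ with $[q,x]-\chi(q)x\in Q$ for all $q\in Q$, $x\in L$. A short Jacobi computation shows $\chi$ vanishes on $[Q,Q]$, and expanding $(\ad q)^p$ inductively from $[q,x]\equiv\chi(q)x\ (\mathrm{mod}\ Q)$ gives $\chi(q^{[p]})=\chi(q)^p$. The decisive additional fact is that $\chi(a^{[p]})=0$ whenever $a\in L\setminus Q$: indeed $a^{[p]}\in Q$, so $[a^{[p]},a]\equiv\chi(a^{[p]})a\ (\mathrm{mod}\ Q)$, while $[a^{[p]},a]=(\ad a)^p(a)=(\ad a)^{p-1}[a,a]=0$ and $a\notin Q$.

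Now fix $c\in L\setminus Q$ and let $R$ be the restricted subalgebra generated by $[Q,Q]$ together with $\{\,a^{[p]}:a\in L\setminus Q\,\}$; note $R\subseteq Q$. Since $\chi$ is linear, vanishes on all these generators and on brackets, and satisfies $\chi(w^{[p]})=\chi(w)^p$, it vanishes identically on $R$. For $q\in Q$, Jacobson's formula gives $q^{[p]}=(q+c)^{[p]}-c^{[p]}-\sum_{i=1}^{p-1}s_i(q,c)$; the first two terms are $p$-th powers of elements of $L\setminus Q$ and so lie in $R$, and replacing $c$ by $\lambda c$ multiplies $s_i(q,c)$ by $\lambda^{p-i}$ (as $s_i$ is homogeneous of degree $p-i$ in its second slot), so comparing the resulting congruences modulo $R$ and using that $\F$ is perfect forces $s_i(q,c)\in R$ for all $i\geq 2$. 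As $s_1(q,c)=(\ad c)^{p-1}(q)$, this says $q^{[p]}\equiv-(\ad c)^{p-1}(q)\ (\mathrm{mod}\ R)$. Applying $\ad c$ once more and using $(\ad c)^p=\ad(c^{[p]})$ with $c^{[p]}\in R$, we get $(\ad c)^p(q)=[c^{[p]},q]\in[R,Q]\subseteq[Q,Q]\subseteq R$; writing $(\ad c)^{p-1}(q)=-q^{[p]}+r$ with $r\in R$ and noting $[c,r]\in Q$ because $\chi(r)=0$, we obtain $[c,q^{[p]}]=[c,r]-(\ad c)^p(q)\in Q$. But $[c,q^{[p]}]\equiv-\chi(q^{[p]})c\ (\mathrm{mod}\ Q)$ with $c\notin Q$, so $\chi(q^{[p]})=0$, i.e.\ $\chi(q)^p=0$, i.e.\ $\chi(q)=0$. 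Since $q\in Q$ was arbitrary, $\chi\equiv 0$, and then $[q,x]\in Q$ for all $q\in Q$, $x\in L$ (for $x\notin Q$ because $[q,x]\equiv\chi(q)x=0$, for $x\in Q$ because $[q,x]\in[Q,Q]$), so $Q=L^{[p]}$ is an ideal.

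The step I expect to be the main obstacle is precisely the control of the Jacobson correction terms $s_i(q,c)$ — equivalently, showing that over a perfect field $L^{[p]}$ is generated, modulo $[Q,Q]$, by the $p$-th powers of elements lying outside it. The scaling trick kills $s_i$ for $i\geq 2$ uniformly in every characteristic $p$ (the monomials $\lambda^{p-i}-1$ on $\F^{\times}$ being linearly independent for those $i$, since $|\F^{\times}|\geq p-1$), and the residual term $s_1$ is absorbed by the final manipulation through the identity $(\ad c)^p=\ad(c^{[p]})$; the bookkeeping that really must be done carefully is keeping $R\subseteq Q$ so that all the brackets produced along the way stay inside $R$, and checking that $\chi$ is genuinely well defined (including the degenerate case $\dim L/Q=1$).
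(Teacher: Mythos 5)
Your proof is correct, but it takes a genuinely different route from the paper's. The paper begins exactly as you do, by applying the restricted quasi-ideal condition to the cyclic subalgebras $\langle x\rangle_p$ to get $[L^{[p]},x]\subseteq L^{[p]}+\F x$, so that $L^{[p]}$ is an ordinary quasi-ideal; it then passes to the core-free quotient and invokes Amayo's classification of core-free quasi-ideals, ruling out each of the three resulting configurations (codimension one, the simple algebra $W(1,2)^2$ in characteristic $2$, and the almost abelian case) by exploiting the $[p]$-structure, with perfectness entering only through the Jordan--Chevalley decomposition in the codimension-one case. Your argument replaces the external classification theorem by a direct, self-contained computation: with $Q=L^{[p]}$, the functional $\chi$ recording the scalar action of $\ad q$ on $L/Q$, the identities $\chi([Q,Q])=0$, $\chi(q^{[p]})=\chi(q)^p$ and $\chi(a^{[p]})=0$ for $a\notin Q$, and the Jacobson-formula-plus-scaling computation forcing $\chi(q^{[p]})=0$ for $q\in Q$ as well, whence $\chi(q)^p=0$ and $\chi=0$. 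I checked the delicate points: $\chi$ is well defined and linear; $\ker\chi$ is a restricted subalgebra of $Q$ containing the generators of $R$, so $\chi$ does vanish on $R$; the homogeneity $s_i(q,\lambda c)=\lambda^{p-i}s_i(q,c)$ matches the standard convention with $s_1(q,c)=\ad^{p-1}(c)(q)$; and the functions $\lambda\mapsto\lambda^{p-i}-1$, $i\geq 2$, are linearly independent on $\F^\times$ while the $i=1$ function is either independent of them or identically zero (the $\F_p$ case), so $s_i(q,c)\in R$ for $i\geq 2$ in every case. One remark worth your attention: the step where you claim to use perfectness in fact only needs $|\F^\times|\geq p-1$, which holds for every field of characteristic $p$; as far as I can tell your argument never uses perfectness at all, so it would establish the proposition over arbitrary fields of characteristic $p$ --- a genuine strengthening of the stated result that you should double-check and, if it survives scrutiny, make explicit.
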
 
\begin{proof} Suppose that $L^{[p]}$ is a restricted quasi-ideal of $L$. Then, for all $x\in L$
\[ [L^{[p]},x]\subseteq L^{[p]}+\langle x\rangle _p=L^{[p]}+\F x,
\] so $L^{[p]}$ is a quasi-ideal.  Suppose that $L^{[p]}$ is not an ideal of $L$,  and factor out $(L^{[p]})_L$, so we can assume that $L^{[p]}$ is core-free.  Then, by  \cite[Theorem 3.6]{ama1},  there are three possibilities which we will consider in turn.
\par

Suppose first that $L^{[p]}$ has codimension 1 in $L$. Define $(L^{[p]})_i$ as in \cite[(5)]{ama}. Then every element $x\in L$ can be written as $x=x_s+x_n$, where $x_s$ is semisimple and $x_n$ is $p$-nilpotent, by \cite[Theorem 3.5]{sf}. Moreover, all semisimple elements belong to $L^{[p]}$, so $L=L^{[p]}+\F x$ for some $p$-nilpotent element $x$. Suppose that $x^{[p]^k}=0$. Now $(L^{[p]})_i= \{y\in L^{[p]} \mid [y,_i x]\in L^{[p]}\}$ for $i\geq 0$, by \cite[Lemma 2.1(b)]{ama}. Hence $[y,_{p^h}x]=[y,x^{[p]^h}]=0$ for $h \geq k$. Also, $(L^{[p]})_0=L^{[p]}$ and $(L^{[p]})_{i+1}\subseteq (L^{[p]})_i$ for $i\geq 0$, so $(L^{[p]})_L=\cap_{i=0}^{\infty} (L^{[p]})_i=L^{[p]}$, by \cite[Lemma 2.1]{ama}, contradicting the fact that $L^{[p]}$ is not an ideal of $L$.
\par

On the other hand, \cite[Theorem 3.6(c)]{ama1} cannot hold, as the three-dimensional simple Lie algebra $W(1,2)^2$ over a field of characteristic $2$ is not restrictable. To see this simply note that the derivation $\ad^2(x)$ is not inner.
\par

Finally, suppose that \cite[Theorem 3.6(d)]{ama1} holds. Then $L=L^2+\F y$ where $\ad(y)$ acts as the identity map on $L^2$ and $L^{[p]}=\F y$.  Let $x\in L^2$.  We have $\ad^p(y)=\ad(y)$ and $\ad^p(x)=0$ for every $x\in L^2$. Therefore, as $L$ is centerless, the $p$-mapping of $L$ is determined by the conditions $y^{[p]}=y$ and $x^{[p]}=0$. This implies $L^{[p]}=L$, a contradiction.
\par

The converse is straightforward.
\end{proof}

\begin{prop}\label{p:car2} Let $L$ be a restricted Lie algebra such that every restricted subalgebra of $L$ is a restricted quasi-ideal. Then $L^2\subseteq L^{[p]}$. It follows that $L^3=L^{p+1}$; in particular, if  $L$ is nilpotent, then $L$ has nilpotency class at most $2$. 
\end{prop}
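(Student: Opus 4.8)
The plan is to exploit the restricted quasi-ideal hypothesis applied to cyclic restricted subalgebras, together with the fact that for any $x \in L$ the element $x^{[p]}$ lies in $\langle x\rangle_p$, so $[x^{[p]}, \langle x\rangle_p] = 0$. First I would show that for every pair $x, y \in L$ one has $[x,y] \in L^{[p]}$. Pick $x,y\in L$ and consider the restricted subalgebra $\langle x\rangle_p$, which by hypothesis is a restricted quasi-ideal, so $[\langle x\rangle_p, \langle y\rangle_p] \subseteq \langle x\rangle_p + \langle y\rangle_p$; hence $[x,y] \in \langle x\rangle_p + \langle y\rangle_p$. The point is to squeeze out the ``non-$[p]$'' parts: write $[x,y] = u + v$ with $u \in \langle x\rangle_p$, $v \in \langle y\rangle_p$, and then use the symmetric relation and a telescoping argument to conclude $[x,y]$ actually lies in the span of the $[p]$-powers, i.e.\ in $L^{[p]}$. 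Concretely, modulo $L^{[p]}$ the subalgebra $\langle x\rangle_p + \langle y\rangle_p$ collapses to $\F x + \F y$, so $[x,y] \equiv \alpha x + \beta y \pmod{L^{[p]}}$; applying $\ad(x)$ and $\ad(y)$ and using that $\langle x\rangle_p$, $\langle y\rangle_p$ are quasi-ideals again (or a Jacobi/antisymmetry manipulation) forces $\alpha = \beta = 0$. This gives $L^2 \subseteq L^{[p]}$.

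Once $L^2 \subseteq L^{[p]}$ is established, the identity $L^3 = L^{p+1}$ should follow from a short computation. We have $L^{[p]}$ spanned (as a restricted subalgebra) by the $x^{[p]}$, and the relevant commutator facts are $[x^{[p]}, y] = [x, {}_p\, y] = (\ad x)^p(y)$ in characteristic $p$; combined with $L^2 \subseteq L^{[p]}$ one gets $[L, L^2] \subseteq [L, L^{[p]}]$, and $[L, L^{[p]}]$ is computed via iterated brackets to land in $L^{p+1} = [L, L, \ldots, L]$ ($p+1$ copies). The reverse inclusion $L^{p+1} \subseteq L^3$ is clear since $p \geq 2$ gives $p + 1 \geq 3$. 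So $L^3 = L^{p+1}$.

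Finally, if $L$ is nilpotent, the equality $L^3 = L^{p+1}$ forces $L^3 = 0$: indeed $L^3 = L^{p+1} \subseteq L^4 \subseteq \cdots$, no wait — one argues instead that $L^3 = L^{p+1}$ together with $L^{k+1} \subseteq L^k$ and $p+1 > 3$ implies $L^3 = L^4 = \cdots = L^{p+1}$, and since some term of the lower central series vanishes, $L^3 = 0$; thus $L$ has nilpotency class at most $2$. The main obstacle I anticipate is the first step: carefully extracting from $[x,y] \in \langle x\rangle_p + \langle y\rangle_p$ that the coefficients of $x$ and $y$ (modulo $L^{[p]}$) vanish. One must handle the possibility that $x$ or $y$ is semisimple (so $\langle x\rangle_p$ need not be $\F x + \F x^{[p]} + \cdots$ in a naive way) and organize the argument so it works uniformly; using the restricted quasi-ideal property for a third subalgebra such as $\langle x + y\rangle_p$, as in the earlier propositions of this section, is likely the cleanest route to pin down the coefficients.
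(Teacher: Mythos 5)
Your overall skeleton (pass to $L/L^{[p]}$, show the quotient is abelian, then get $L^3=L^{p+1}$ from $\ad(x^{[p]})=(\ad x)^p$) matches the paper's, but two essential pieces are missing. The step you yourself flag as the main obstacle is genuinely unresolved: from $[x,y]\equiv\alpha x+\beta y\pmod{L^{[p]}}$, ``applying $\ad(x)$ and $\ad(y)$'' or a Jacobi/antisymmetry manipulation cannot force $\alpha=\beta=0$, because a Lie algebra in which every bracket $[x,y]$ lies in $\F x+\F y$ need not be abelian --- almost abelian algebras (and $\mathfrak{sl}(2,\F)$ in characteristic $2$) are precisely such algebras; this is the content of Amayo's classification of quasi-ideal algebras. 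What actually kills the nonabelian case is the restricted structure of the quotient $\mathfrak{L}=L/L^{[p]}$: there $\bar{x}^{[p]}=0$ for every $x$, hence $(\ad\bar{x})^p=\ad(\bar{x}^{[p]})=0$, which is incompatible with $\ad\bar{x}$ acting with the nonzero eigenvalue $\beta$ on $[\bar x,\bar y]$ (one gets $0=(\ad\bar x)^p(\bar y)=\beta^{p-1}[\bar x,\bar y]$, and symmetrically for $\alpha$). The paper reaches the same conclusion by quoting Amayo's Theorem 3.8, so that $\mathfrak{L}$ is abelian or almost abelian, and then observing $0=[y^{[p]},x]=x$ in the almost abelian case; your direct computation would also work, but only once this $(\ad\bar x)^p=0$ ingredient is supplied. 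Note too that both routes first need $L^{[p]}$ to be an ideal before one can compute modulo it; this is Proposition~\ref{p:qii} and must be invoked. Your worry about semisimple elements is, by contrast, harmless: one always has $\langle x\rangle_p=\sum_{i\geq 0}\F x^{[p]^i}$, so $\langle x\rangle_p$ collapses to $\F\bar x$ in the quotient in every case.

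The second gap is the characteristic $2$ case of the final assertion. Your deduction of ``nilpotent $\Rightarrow$ class at most $2$'' explicitly uses $p+1>3$, so it says nothing when $p=2$, where $L^3=L^{p+1}$ is a tautology. The paper spends half of its proof on exactly this case: it passes to a quotient $H=L/C_{n-3}(L)$ of class exactly $3$, uses the failure of the second Engel condition to produce $x,y$ with $[x,y^{[2]}]=[[x,y],y]\neq0$, and then derives a contradiction by expanding $[x,y^{[2]}]$ and $[x,y]$ against the quasi-ideal relations and bracketing with suitable elements to annihilate the coefficients. Your proposal contains no argument at all for $p=2$, so the statement is not established in that characteristic.
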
 
\begin{proof} By Proposition~\ref{p:qii}, $L^{[p]}$ is a restricted ideal.  Put $\mathfrak{L}=L/L^{[p]}$. Then  $\mathfrak{L}^{[p]}=0$ and every subalgebra of $\mathfrak{L}$ is a quasi-ideal. If $\mathfrak{L}$ is not abelian then it is almost abelian, by \cite[Theorem 3.8]{ama1}, so $\mathfrak{L}=\mathfrak{L}^2+\F y$, where $\ad(y)$ acts as the identity map on $\mathfrak{L}^2$. But then, if $0\neq x\in \mathfrak{L}^2$, then $0=[y^{[p]},x]=x$, a contradiction. It follows that $\mathfrak{L}^2=0$, so $L^2\subseteq L^{[p]}$. Now, if 
	$p\neq 2$, then we are done. Assume then that $p=2$, and suppose, by contradiction, that $L$ has nilpotency class $n>2$. Set $H=L/C_{n-3}(L)$, which has nilpotency class $3$. By~\cite[Chapter 16, Proposition 1.1]{amast}, $H$ does not satisfy the second Engel condition, and therefore there exist $x,y\in H$ such that $[x,y^{[2]}]=[[x,y],y]\neq 0$.  Set $\tilde{x},\tilde{y}$ to be preimages of $x,y$ in $L$, and note that $\tilde{x}^{[2]^2},\tilde{y}^{[2]^2},[\tilde{x}^{[2]},\tilde{y}^{[2]}]\in C_{n-3}(L)$. Then, by hypothesis we can write $[x,y^{[2]}]=\lambda_1x+\lambda_2x^{[2]}+\lambda_3y^{[2]}$ for some $\lambda_i\in \F$, $i=1,2,3$. Also, we have that $[[x,y^{[2]}],z]=0$ for any $z\in H$. For $z=y^{[2]}$ we obtain that $\lambda_1=0$, for $z=x$ we have $\lambda_3=0$ and, finally, for $z=y$ we get $[x^{[2]},y]=0$. Now, write $[x,y]=\lambda_4x+ \lambda_5x^{[2]}+ \lambda_6y+\lambda_7y^{[2]}$, for some $\lambda_i\in \F$, $i=4,\dots,7$. But then $[x,y^{[2]}]=[[x,y],y]=\lambda_4[x,y]$, and $0=[[x,y],y^{[2]}]=\lambda_4[x,y^{[2]}]$. Consequently, $\lambda_4=0$ and $[x,y^{[2]}]=0$, a contradiction.
\end{proof}


\begin{lemma}\label{l} Let $L$ be a restricted Lie algebra over an algebraically closed field of characteristic $p>0$ in which every restricted subalgebra is a restricted quasi-ideal.  If $H$ is a Cartan subalgebra of $L$, then $L$ has root space decomposition $$L=H\dot{+}(\oplus_{\alpha \in \Phi}(L_{\alpha}\dot{+}L_{-\alpha}) \oplus_{\beta \in \Psi} L_{\beta}),$$ where $\Phi$ is the set of roots $\alpha$ for which $-\alpha$ is also a root, and $\Psi$ is the remaining set of roots.
\end{lemma}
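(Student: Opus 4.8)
The plan is to exploit the fact that a Cartan subalgebra $H$ of $L$ is in particular a restricted subalgebra, hence a restricted quasi-ideal, and to combine this with the general root space decomposition of $L$ relative to $H$. Since $\F$ is algebraically closed, $H$ acts on $L$ and we obtain the decomposition $L = L_0 \dot{+} \bigoplus_{\alpha} L_\alpha$ into generalised weight spaces, where $L_0 = H$ (because $H$ is self-normalising, being Cartan), the sum runs over the nonzero roots $\alpha \in H^*$, and each $L_\alpha = \{ x \in L : (\ad h - \alpha(h))^{\dim L} x = 0 \text{ for all } h \in H\}$. So the content of the statement is purely the assertion about which roots occur: that the set of roots is symmetric up to the roots $\beta$ with $-\beta$ not a root being collected separately; this is a tautology once we agree that $\Phi := \{\alpha : -\alpha \text{ is also a root}\}$ and $\Psi$ is the complement. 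Thus what actually needs proving is that this is a genuine vector-space decomposition with $L_0 = H$, i.e. essentially the standard root space decomposition, which I would cite from the general theory of (not necessarily restricted) Lie algebras over a field containing the eigenvalues — nothing restricted is needed for the decomposition itself.

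First I would recall that for any Lie algebra $L$ over a field and any nilpotent subalgebra $K$ acting on a finite-dimensional $L$-module (here $L$ itself, via $\ad$), Fitting's lemma gives $L = \bigoplus_{\alpha \in K^*} L_\alpha$ provided all the relevant eigenvalues lie in $\F$ — which holds since $\F$ is algebraically closed — and that $L_0 \supseteq K$ always, with $L_0 = K$ exactly when $K$ is its own normaliser, i.e. when $K = H$ is a Cartan subalgebra. This gives $L = H \dot{+} \bigoplus_{\alpha \neq 0} L_\alpha$. Then I would simply partition the set of nonzero roots: let $\Phi$ be those $\alpha$ for which $-\alpha$ is also a root, and $\Psi$ the rest, and regroup the sum as $\bigoplus_{\alpha \in \Phi}(L_\alpha \dot{+} L_{-\alpha}) \oplus \bigoplus_{\beta \in \Psi} L_\beta$. (Strictly one should note $\Phi$ is closed under negation so each pair $\{\alpha, -\alpha\}$ is counted once; this is a cosmetic point.) At this level the hypothesis that every restricted subalgebra is a restricted quasi-ideal has not yet entered.

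The role of the quasi-ideal hypothesis, which I expect the authors want in place for the later lemmas that build on this root space decomposition, is presumably to guarantee good behaviour of $H$ — for instance that $[H, L_\alpha] \subseteq H + L_\alpha$, forcing strong constraints on the root spaces, and ultimately that the decomposition is into honest weight spaces rather than merely generalised ones. Concretely, since $H$ is a restricted quasi-ideal, for $x \in L_\alpha$ with $\alpha \neq 0$ we have $[H, \langle x \rangle_p] \subseteq H + \langle x\rangle_p$; I would use this together with the fact that $\langle x \rangle_p$ contains the iterated $p$-th powers $x^{[p]^i}$ (which lie in various root spaces $L_{p^i \alpha}$ — using that $\ad(x^{[p]}) = \ad(x)^p$ and that the Frobenius twist sends $L_\alpha$ into $L_{p\alpha}$, interpreting $\alpha$ as living in the appropriate $p$-power-twisted dual) to pin down the interaction. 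I would therefore state the decomposition as above and, if the subsequent development needs it, record that $[H, L_\alpha] \subseteq L_\alpha$ for each root $\alpha$. The main obstacle, and the only genuinely substantive point, is keeping careful track of the semilinear action of the $[p]$-operator on root spaces — that $x^{[p]^i} \in L_{p^i\alpha}$ in the sense of the Frobenius-twisted weight — so that the quasi-ideal containments $[H, x^{[p]^i}] \in H + \langle x\rangle_p$ can be read off componentwise; everything else is an immediate consequence of Fitting's lemma and the definition of a Cartan subalgebra.
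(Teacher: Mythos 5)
Your argument only establishes the standard generalised weight space (Fitting) decomposition of $L$ relative to $H$ and then regroups the roots, and you say explicitly that the quasi-ideal hypothesis ``has not yet entered.'' That is the gap. In this paper $\oplus$ denotes an \emph{algebra} direct sum, while $\dot{+}$ is reserved for direct sums of the underlying vector space; so the lemma is not the ``tautology'' you describe. Its actual content is that the blocks $L_{\alpha}\dot{+}L_{-\alpha}$ (for $\alpha\in\Phi$) and $L_{\beta}$ (for $\beta\in\Psi$) pairwise commute, i.e.\ $[L_{\alpha},L_{\beta}]=0$ whenever $\alpha,\beta$ are nonzero roots with $\alpha+\beta\neq 0$. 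This is exactly the statement that the subsequent Corollary on the structure of $N$ relies on, and it is precisely where the quasi-ideal hypothesis is needed; your sketch never proves it.

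The paper's argument is short: for $x_{\alpha}\in L_{\alpha}$ one has $x_{\alpha}^{[p]}\in H$ by \cite[Chapter 2, Corollary 4.3]{sf}, so $\langle x_{\alpha}\rangle_p\subseteq \F x_{\alpha}+H$. Applying the restricted quasi-ideal condition to $\langle x_{\alpha}\rangle_p$ and $\langle x_{\beta}\rangle_p$ gives $[x_{\alpha},x_{\beta}]=\lambda x_{\alpha}+\mu x_{\beta}+h$ with $h\in H$, while on the other hand $[x_{\alpha},x_{\beta}]\in L_{\alpha+\beta}$; since $\alpha+\beta$ is distinct from $\alpha$, from $\beta$ and (unless $\beta=-\alpha$) from $0$, comparing components of the weight space decomposition forces $[x_{\alpha},x_{\beta}]=0$ when $\alpha+\beta\neq 0$, and $[L_{\alpha},L_{-\alpha}]\subseteq H$ otherwise. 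Your speculative uses of the hypothesis --- controlling $[H,L_{\alpha}]$, or upgrading generalised weight spaces to honest eigenspaces --- are aimed at a different conclusion and do not yield the vanishing $[L_{\alpha},L_{\beta}]=0$ that the lemma asserts. To repair your proof you would need to add the displayed computation above (or an equivalent one); the Fitting decomposition part of your write-up is fine but is only the starting point.
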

\begin{proof} Let $T$ be a maximal torus, $H=C_L(T)$ and let $L=H\dot{+}_{\alpha \in \Pi}L_{\alpha}$ be the corresponding root space decomposition. Then
\[ [x_{\alpha},x_{\beta}]=\lambda x_{\alpha}+\mu x_{\beta}+h \hbox{ for some } h \in H,
\] since $L_{\alpha}^{[p]}\subseteq H$ for all $\alpha \in \Pi$, by \cite[Corollary 4.3]{sf}. But $[L_{\alpha},L_{\beta}]\subseteq L_{\alpha+\beta}$, so, either $[L_{\alpha},L_{\beta}]=0$ or $[L_{\alpha},L_{\beta}]\subseteq H$ and $\alpha+\beta=0$. If $[L_{\alpha},L_{\beta}]=0$ for $\alpha\neq \beta$ then $[L_{-\alpha},L_{\beta}]=0$ also, giving the root space decomposition claimed.
\end{proof}
\medskip

Suppose that every restricted subalgebra of $L$ is a restricted quasi-ideal. Let $S$ be the subspace spanned by the semisimple elements of $L$ and let $P$ be the subspace spanned by the $p$-nilpotent elements of $L$. Then $S$ and $P$ are subalgebras of $L$, since $[x,y] \in \langle x \rangle_p + \langle y \rangle_p$, and, if $\F $ is perfect, $L=S+P$. Moreover, both are restricted, since
\[ (\lambda x + \mu y)^{[p]} = \lambda^px^{[p]} + \mu^py^{[p]} + \sum_{i=1}^{p-1} s_i(x,y),
\] and $x^{[p]}, y^{[p]}$ are semisimple (respectively, $p$-nilpotent) if so are $x, y$, and $s_i(x,y)\in \langle x,y \rangle^p$.

\begin{prop}\label{nilp} Let $L$ be a nilpotent restricted Lie algebra over a perfect field of characteristic different from $2$. Then every restricted subalgebra of $L$ is a restricted quasi-ideal of $L$ if and only if $L=S\oplus P$, where $S$ is a toral ideal and $P$ is a p-nilpotent ideal in which every restricted subalgebra is a restricted quasi-ideal. 
\end{prop}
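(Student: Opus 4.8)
The plan is to prove the two implications separately. The ``if'' direction is essentially formal, once one notices that a torus which is an ideal of a nilpotent restricted Lie algebra must be central. So suppose $L=S\oplus P$ with $S$ a toral ideal and $P$ a $p$-nilpotent ideal enjoying the stated property. Since $L$ is nilpotent and $S$ is a torus, every $x\in S$ is semisimple, so $\ad(x)$ is a semisimple linear operator that is also nilpotent, hence $\ad(x)=0$; thus $S\subseteq Z(L)$. Given a restricted subalgebra $H$ of $L$ and $h\in H$, write $h=s+q$ with $s\in S$ and $q\in P$: then $s$ is semisimple, $q$ is $p$-nilpotent (as $P$ is), and $[s,q]=0$, so by the uniqueness of the restricted Jordan--Chevalley decomposition (\cite[Chapter 2]{sf}) $s=h_{s}$ and $q=h_{n}$ both lie in $\langle h\rangle_{p}\subseteq H$. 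Hence $H=(H\cap S)\dot{+}(H\cap P)$, and therefore, using that $S$ is central,
\[
[H,K]=[H\cap P,\,K\cap P]\subseteq (H\cap P)+(K\cap P)\subseteq H+K
\]
for all restricted subalgebras $H,K$ of $L$ (the middle inclusion since $H\cap P$ and $K\cap P$ are restricted subalgebras of $P$). So every restricted subalgebra of $L$ is a restricted quasi-ideal.

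For the ``only if'' direction, assume every restricted subalgebra of $L$ is a restricted quasi-ideal. As noted just before the statement, $S$ and $P$ are restricted subalgebras and $L=S+P$. The nilpotence argument above again gives $S\subseteq Z(L)$; being an abelian restricted subalgebra spanned by semisimple elements, $S$ is a torus, hence a toral ideal of $L$, and it coincides with the set of all semisimple elements of $L$. Since $S$ is central, $[L,P]=[P,P]\subseteq P$, so $P$ is a restricted ideal. It therefore remains to show that $S\cap P=0$ and that $P$ is $p$-nilpotent; the quasi-ideal property inside $P$ is then automatic, because restricted subalgebras of $P$ are restricted subalgebras of $L$.

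By Proposition~\ref{p:car2}, $L$ has nilpotency class at most $2$. Hence $(\ad x)^{2}=0$, so $x^{[p]}\in Z(L)$ for every $x\in L$; and, because $p>2$, every term $s_{i}(x,y)$ occurring in Jacobson's expansion of $(x+y)^{[p]}$ is an iterated bracket of length $p\ge 3$ and thus vanishes. So the $p$-operation is an additive, Frobenius-semilinear map $\theta\colon L\to Z(L)$. Decompose $Z(L)=S\oplus Z(L)_{\mathrm{nil}}$, the torus of $Z(L)$ plus its set of $p$-nilpotent elements (see \cite[Chapter 2]{sf}), and let $\psi\colon L\to S$ be $\theta$ composed with the projection onto $S$; then $\psi$ is semilinear, so $\ker\psi$ is a subspace. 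A short computation with Jordan decompositions shows $x\in L$ is $p$-nilpotent if and only if $\psi(x)=0$: if $x$ is $p$-nilpotent then so is $x^{[p]}$, whence its semisimple part $\psi(x)$ is zero; conversely, if $\psi(x)=0$ then $x^{[p]}\in Z(L)_{\mathrm{nil}}$ is $p$-nilpotent and so is $x$. Thus $P=\ker\psi$. For $v\in S$ one has $\psi(v)=v^{[p]}$, and the $p$-operation is bijective on the torus $S$, so $S\cap P=\{v\in S:v^{[p]}=0\}=0$ and $L=S\oplus P$. Finally, choosing $m$ with $w^{[p]^{m}}=0$ for all $w\in Z(L)_{\mathrm{nil}}$, we get $x^{[p]^{m+1}}=(x^{[p]})^{[p]^{m}}=0$ for every $x\in P$ (as $x^{[p]}\in Z(L)_{\mathrm{nil}}$), so $P$ is $p$-nilpotent.

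I expect the equality $S\cap P=0$ to be the main obstacle: in a general nilpotent restricted Lie algebra the $p$-nilpotent elements need not span a complement of the maximal torus (for instance, a Heisenberg algebra whose central generator is made toral is generated by $p$-nilpotent elements and yet contains a nonzero toral element), so the quasi-ideal hypothesis has to be used in an essential way. The device that makes it work is to extract nilpotency class $\le 2$ from Proposition~\ref{p:car2}, which for $p>2$ linearises the $p$-operation; this is also precisely where $p\neq 2$ enters, since for $p=2$ the surviving term $s_{1}(x,y)=[x,y]$ destroys the additivity of the $p$-operation.
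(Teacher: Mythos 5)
Your proof is correct and follows essentially the same route as the paper's: Proposition~\ref{p:car2} gives nilpotency class at most $2$ and $L^{[p]}\subseteq Z(L)$, whence the $p$-map is additive for $p\neq 2$ and $S$, $P$ coincide with the sets of semisimple and $p$-nilpotent elements, yielding $L=S\oplus P$. You simply supply more detail than the paper's terse write-up (the semilinear map $\psi$ for $S\cap P=0$, the uniform $p$-nilpotency bound for $P$, and the converse via the Jordan--Chevalley decomposition).
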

\begin{proof} Suppose that every restricted subalgebra of $L$ is a restricted quasi-ideal of $L$ . By Proposition~\ref{p:car2}, $L^3=0$ and $L^{[p]}\subseteq Z(L)$. Then, for all $x,y\in L$, $(x+y)^{[p]}=x^{[p]}+y^{[p]}$, so $S, P$ are just the sets of semisimple and $p$-nilpotent elements of $L$ respectively. Then $S\cap P=0$ and $S\subseteq Z(L)$. It follows that $L=S\oplus P$ and that $S$ is toral. 
\par

The converse is straightforward.
\end{proof}

\begin{coro}\label{nilpc} Let $L$ be a restricted Lie algebra over an algebraically closed field of characteristic different from 2 in which every restricted subalgebra of $L$ is a restricted quasi-ideal of $L$. Then $L$ has a Cartan subalgebra $H$ such that $H=S\oplus P$ where $S$ is a torus and $P$ is the set of p-nilpotent elements in $H$, and $L=S\dot{+} N$ where $N$ is an ideal, $N^3=0$ and $N^{[p]}\subseteq Z(H)$. 
\end{coro}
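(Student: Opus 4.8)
The plan is to take $H$ to be the centraliser of a maximal torus and to bootstrap from the nilpotent case (Proposition~\ref{nilp}) together with the root space decomposition of Lemma~\ref{l}. So I would first fix a maximal torus $T$ of $L$ and put $H=C_L(T)$; this is a Cartan subalgebra of $L$, it is a restricted subalgebra, and it is nilpotent, so it inherits the hypothesis that every restricted subalgebra is a restricted quasi-ideal. Proposition~\ref{nilp} then gives $H=S\oplus P$ with $S$ a torus (contained in $Z(H)$) and $P$ the set of $p$-nilpotent elements of $H$; since $S$ is an abelian restricted subalgebra of $L$ made up of semisimple elements and containing $T$, maximality of $T$ forces $S=T$. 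As in the proof of Proposition~\ref{nilp}, Proposition~\ref{p:car2} applied to $H$ gives $H^3=0$ and $H^{[p]}\subseteq Z(H)$. Lemma~\ref{l} supplies $L=H\,\dot{+}\,U$ with $U=\bigoplus_\gamma L_\gamma$, the sum over the non-zero roots of $T$, in which $[L_\gamma,L_\delta]=0$ whenever $\gamma+\delta\neq 0$ and $[L_\alpha,L_{-\alpha}]\subseteq H$.

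Next I would extract the facts I need about the root spaces. Fix a non-zero root $\gamma$ and $x\in L_\gamma$. Using the bracket relations above (and that $2\gamma\neq 0,\gamma$, valid since the characteristic is $\neq 2$), $(\ad x)^2$ kills $H$ and $(\ad x)^3$ kills $L$; as $p\geq 3$ this gives $\ad(x^{[p]})=(\ad x)^p=0$, so $x^{[p]}\in Z(L)\subseteq Z(H)$ and $\langle x\rangle_p\subseteq\F x+Z(L)$. Writing the Jordan decomposition $x=x_s+x_n$ (with $x_s$ semisimple, $x_n$ $p$-nilpotent, commuting, both lying in $\langle x\rangle_p$), one has $x^{[p]}=x_s^{[p]}+x_n^{[p]}$ with semisimple part $x_s^{[p]}\in Z(L)$, and hence $x_s\in\langle x_s^{[p]}\rangle_p\subseteq Z(L)$. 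Finally, for $p_0\in P$ the quasi-ideal property gives $[p_0,x]\in\langle p_0\rangle_p+\langle x\rangle_p\subseteq H+\F x$; since also $[p_0,x]\in L_\gamma$ this forces $[p_0,x]\in\F x$, and then $\ad p_0$ being nilpotent makes this scalar zero. Thus $[P,L_\gamma]=0$.

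The crux is showing $[L_\alpha,L_{-\alpha}]\subseteq P$. For $x\in L_\alpha$, $y\in L_{-\alpha}$, write $x=x_s+x_n$ and $y=y_s+y_n$ with $x_s,y_s\in Z(L)$; then $[x,y]=[x_n,y_n]$, and applying the quasi-ideal hypothesis to the restricted subalgebras $\langle x_n\rangle_p$, $\langle y_n\rangle_p$ (noting $x_n^{[p]},y_n^{[p]}\in Z(L)\cap P$) gives $[x_n,y_n]\in\F x_n+\F y_n+(Z(L)\cap P)$. But $[x_n,y_n]\in L_0=H$, while the $L_\alpha$-component of $x_n$ is $x$ and the $L_{-\alpha}$-component of $y_n$ is $y$; comparing these components (and using $\alpha\neq-\alpha$) annihilates the coefficients of $x_n$ and $y_n$, so $[x,y]\in Z(L)\cap P\subseteq P$. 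I expect this to be the main obstacle: the quasi-ideal hypothesis is genuinely used only here, and passing to the $p$-nilpotent parts and invoking the weight grading is exactly what rules out an $\mathfrak{sl}(2,\F)$-type piece inside $H\,\dot{+}\,L_\alpha\,\dot{+}\,L_{-\alpha}$, which would otherwise force the complement to meet $S$.

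Finally I would set $N:=P+U$, so $L=S\,\dot{+}\,N$ as vector spaces. The relations $[S,P]=0$, $[S,U]\subseteq U$, $[P,P]\subseteq P$, $[P,U]=0$, $[U,U]\subseteq P$ (the last two coming from the previous two paragraphs) show $N$ is an ideal. From $[P,P]\subseteq H^2\subseteq H^{[p]}\subseteq Z(H)$ and $[L_\alpha,L_{-\alpha}]\subseteq Z(L)\cap P$ we get $N^2\subseteq Z(H)\cap P$, hence $N^3\subseteq[Z(H)\cap P,\,P+U]=0$. And for $z=p_0+\sum_\gamma x_\gamma\in N$, iterating Jacobson's formula gives $z^{[p]}\equiv p_0^{[p]}+\sum_\gamma x_\gamma^{[p]}$ modulo $N^2\subseteq Z(H)$, where $p_0^{[p]}\in H^{[p]}\subseteq Z(H)$ and each $x_\gamma^{[p]}\in Z(L)\subseteq Z(H)$; so $z^{[p]}\in Z(H)$, and since $Z(H)$ is a restricted subalgebra, $N^{[p]}\subseteq Z(H)$. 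The Cartan subalgebra asked for is $H=C_L(T)$.
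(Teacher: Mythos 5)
Your proof is correct and follows essentially the same route as the paper: take $H=C_L(T)$, apply Proposition~\ref{nilp} to get $H=S\oplus P$, use the root space decomposition of Lemma~\ref{l}, and apply the quasi-ideal hypothesis to root vectors to control $[H,L_\alpha]$ and $[L_\alpha,L_{-\alpha}]$ before setting $N=P+\sum_\alpha L_\alpha$. The only divergences are in detail — you derive $x_\alpha^{[p]}\in Z(L)$ from $(\mathrm{ad}\,x_\alpha)^3=0$ and pass through the Jordan decomposition, where the paper instead cites $\alpha(x_\alpha^{[p]})=0$ — and these fill in verifications the paper leaves implicit.
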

\begin{proof} We have that $L$ has the form given in Lemma \ref{l} and $H=S\oplus P$, by Proposition \ref{nilp}. Now $L_{\alpha}^2=L_{-\alpha}^2=L_{\beta}^2=0$ since $2\alpha$, $-2\alpha$ and $2\beta$ are not roots. For every $h\in H$, $\alpha \in \Pi=\Phi \cup \Psi$, we have that $[h,x_{\alpha}]\in (\langle h\rangle_p+\langle x_{\alpha}\rangle_p)\cap L_{\alpha}$, so $[h,x_{\alpha}]=\lambda x_{\alpha}$ for some $\lambda \in \F $; that is, $h$ acts semisimply on $L_{\alpha}$. Also $\alpha(x_{\alpha}^{[p]})=0$, by \cite[Chapter 2, Corollary 4.3 (4)]{sf}. It follows that $[x_{\alpha}^{[p]}, x_{-\alpha}]=0$. Similarly, $[x_{-\alpha}^{[p]}, x_{\alpha}]=0$. Now $[x_{\alpha},x_{-\alpha}]\in \langle x_{\alpha}^{[p]}\rangle_p + \langle x_{-\alpha}^{[p]}\rangle_p$, so, if  $N=P+\sum_{\alpha\in\Phi}(L_{\alpha}+L_{-\alpha})+\sum_{\beta\in\Psi} L_{\beta}$  we have  $N^3=0$  and $N^{[p]}\subseteq Z(H)$. 
\end{proof}



\section{$J$-algebras and lower semimodular restricted Lie algebras}

 For this section, it will be useful to handle the following result. 

\begin{lemma}\label{flag} Let $L$ be a restricted Lie algebra over an algebraically closed field of characteristic $p>0$. If $L$ is supersolvable, then $L$ admits a complete flag made up of restricted ideals of $L$.
\end{lemma}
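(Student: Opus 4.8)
The statement asserts that a supersolvable restricted Lie algebra $L$ over an algebraically closed field of characteristic $p>0$ admits a complete flag of \emph{restricted} ideals. By definition of supersolvability, $L$ already has a complete flag $0=L_0\subsetneq L_1\subsetneq\cdots\subsetneq L_n=L$ of (ordinary, not necessarily restricted) ideals. The idea is to replace this flag, one step at a time, by restricted ideals, using that over an algebraically closed field every ideal in a supersolvable algebra sits inside a $1$-codimensional ideal, and that the $p$-operation interacts well with $1$-dimensional subspaces. I would argue by induction on $\dim L$, passing to a quotient.

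First I would produce a $1$-dimensional \emph{restricted} ideal. Take the smallest nonzero ideal $L_1=\F a$ from the given flag. Since $L_1$ is an ideal, $[L,a]\subseteq \F a$, i.e. $\ad(a)$ maps $L$ into $\F a$; in particular $\ad(a)^2=0$, so by the standard identity $\ad(a^{[p]})=\ad(a)^p=0$ (as $p\ge 2$ and $\ad(a)^2=0$ forces $\ad(a)^p=0$), hence $a^{[p]}\in Z(L)$. If $a^{[p]}\in\F a$ we are already done: $\F a$ is a restricted ideal. Otherwise $Z(L)$ contains the element $a^{[p]}\notin\F a$; I would then note that $Z(L)$, being central, has \emph{all} of its subspaces as ideals, and since $\F$ is algebraically closed and perfect one can find inside the two-dimensional space $\F a+\F a^{[p]}\subseteq Z(L)+\F a$ a $1$-dimensional restricted subspace closed under $[p]$. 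More cleanly: consider $\F a^{[p]}$; since $a^{[p]}$ is central, $\ad(a^{[p]})=0$, and $(a^{[p]})^{[p]}$ is again central, so the descending chain $\F a^{[p]}\supseteq\dots$ lands, after finitely many applications of $[p]$, in a $1$-dimensional restricted ideal — unless some $(a^{[p]})^{[p]^k}=0$, in which case already $\F(a^{[p]})^{[p]^{k-1}}$ (or, tracking back, a suitable $1$-dimensional central subspace) is a restricted ideal. In all cases $L$ has a $1$-dimensional restricted ideal $I=\F z$.

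Next, quotient out: $L/I$ is again restricted and supersolvable (supersolvability passes to quotients, as a homomorphic image of the complete flag of ideals gives a chain of ideals, refinable to a complete flag since dimensions can only drop by one at a time through the quotient). By the inductive hypothesis, $L/I$ has a complete flag $0=\bar L_0\subsetneq\bar L_1\subsetneq\cdots\subsetneq\bar L_{n-1}=L/I$ of restricted ideals. Pulling back along the projection $L\to L/I$ gives restricted ideals $I=K_1\subsetneq K_2\subsetneq\cdots\subsetneq K_n=L$ with $\dim K_{j}-\dim K_{j-1}=1$; prefixing $K_0=0$ completes the flag. (Preimages of restricted ideals are restricted ideals, and dimensions increase by exactly one since the $\bar L_j$ did.)

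The main obstacle is the base case / the production of the first $1$-dimensional restricted ideal when $a^{[p]}\notin\F a$: one must be careful that iterating $[p]$ on a central element either stabilises on a $1$-dimensional $[p]$-closed line or eventually hits $0$, and in the latter situation argue that the \emph{last nonzero} term of the sequence spans a $1$-dimensional restricted ideal (it is central, hence all its subspaces are ideals, and its $[p]$-power is $0$, hence it is $[p]$-closed). Here algebraic closedness is what guarantees that, in the stabilising case, the semisimple part lives in a $1$-dimensional torus — equivalently, the minimal polynomial of the relevant Frobenius-semilinear map splits. Once this is set up, the inductive pull-back step is routine.
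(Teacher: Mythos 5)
Your proof is correct and follows essentially the same route as the paper's: reduce to producing a one-dimensional restricted ideal, observe that $\ad(a)^2=0$ forces $a^{[p]}$ (and hence $\langle a^{[p]}\rangle_p$) to be central, extract either a last nonzero $p$-nilpotent power or (using algebraic closedness, i.e.\ \cite[Chapter 2, Theorem 3.6]{sf}) a toral element spanning a restricted central line, and finish by induction on the quotient. The only cosmetic slip is the phrase ``descending chain $\F a^{[p]}\supseteq\dots$'' --- the lines $\F a^{[p]^k}$ need not be nested --- but you repair this yourself by casing on whether $\langle a^{[p]}\rangle_p$ is $p$-nilpotent, which is exactly the dichotomy the paper's citation handles.
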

\begin{proof} Plainly, it is enough to show that $L$ has a one-dimensional restricted ideal, from which the conclusion will follow by induction. Suppose $\dim L>1$, the claim being trivial otherwise.  Consider a complete flag  
$$
0=L_0\subsetneq L_1 \subsetneq \cdots \subsetneq L_n=L
$$
of ideals of $L$.  If the ideal $L_1$ is restricted, then we are done. Thus we can suppose that there exists $x\in L_1$ such that $x^{[p]}\notin L_1$. As $L_1$ is an abelian ideal, the restricted subalgebra $H$ generated by  $x^{[p]}$ is contained in the centre of $L$.  Since the ground field is algebraically closed, by \cite[Chapter 2,  Theorem 3.6]{sf} we see that $H$ contains a toral element $t$. We conclude that $I=\F  t$ is a one-dimensional restricted ideal of $L$, as desired.
\end{proof}

Note that the assumption that the ground field is algebraically closed is essential for the validity of Lemma \ref{flag}. In fact, over arbitrary fields of positive characteristic, there can be cyclic restricted Lie algebras of arbitrary dimension with no non-zero proper restricted subalgebras (cf. \cite[Proposition 3.1]{mps}).  

\medskip
Let $L$ be a restricted Lie algebra. A restricted subalgebra $U$ of $L$ is called {\em lower semimodular} in $L$ if $U\cap B$ is maximal in $B$ for every restricted subalgebra $B$ of $L$ such that $U$ is maximal in $\langle U,B\rangle_p$. We say that $L$ is {\it lower semimodular} if every restricted subalgebra of $L$ is lower semimodular in $L$. 
\par

If $U$, $V$ are restricted subalgebras of $L$ with $U\subseteq V$, a {\em J-series} (or {\em Jordan-Dedekind series}) for $(U,V)$ is a series
\[ U=U_0\subsetneq U_1\subsetneq \ldots \subsetneq U_r=V
\] of restricted subalgebras such that $U_i$ is a maximal subalgebra of $U_{i+1}$ for $0\leq i \leq r-1$. This series has {\em length} equal to $r$. We shall call $L$ a {\em J-algebra} if, whenever $U$ and $V$ are restricted subalgebras of $L$ with $U\subseteq V$, all $J$-series for $(U,V)$ have the same finite length, $d(U,V)$. Put $d(L)=d(0,L)$. 

\begin{prop}\label{p:equiv} For a solvable restricted Lie algebra $L$ over an algebraically closed field of characteristic $p>0$ the following are equivalent:
\begin{itemize}
\item[(i)] $L$ is lower semimodular;
\item[(ii)] $L$ is a $J$-algebra; and
\item[(iii)] $L$ is supersolvable. 
\end{itemize}
\end{prop}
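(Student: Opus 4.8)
The plan is to prove the cyclic chain of implications $(iii)\Rightarrow(ii)\Rightarrow(i)\Rightarrow(iii)$. The implication $(iii)\Rightarrow(ii)$ is the natural starting point: if $L$ is supersolvable then, by Lemma~\ref{flag}, it admits a complete flag $0=L_0\subsetneq L_1\subsetneq\cdots\subsetneq L_n=L$ of restricted ideals. One then shows by induction on $\dim V-\dim U$ that any two $J$-series for a pair $(U,V)$ of restricted subalgebras have the same length: the key point is that supersolvability is inherited by restricted subalgebras (a complete flag of restricted ideals of $L$ intersects down to a complete flag of restricted ideals of any restricted subalgebra), so it suffices to handle the case $U$ maximal in $V$ and show $\dim V - \dim U = 1$; but a maximal restricted subalgebra of a supersolvable algebra has codimension $1$, by the flag argument. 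This pins down $d(U,V)=\dim V-\dim U$, which is visibly path-independent, giving $(ii)$.

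For $(ii)\Rightarrow(i)$: suppose $L$ is a $J$-algebra and let $U$ be maximal in $\langle U,B\rangle_p$ for some restricted subalgebra $B$. I want $U\cap B$ maximal in $B$. Take any $J$-series from $U\cap B$ to $B$; appending $U$ (note $U\subseteq\langle U,B\rangle_p$ and $B\subseteq\langle U,B\rangle_p$) and using the modular-type comparison of $J$-series lengths in the interval $[U\cap B,\langle U,B\rangle_p]$, the equalities $d(U\cap B,B)=d(U\cap B,\langle U,B\rangle_p)-d(B,\langle U,B\rangle_p)$ together with $d(U,\langle U,B\rangle_p)=1$ force $d(U\cap B,B)=1$, i.e.\ $U\cap B$ is maximal in $B$. (Here one exploits that in a $J$-algebra every interval has a well-defined length function that behaves additively along chains.)

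The substantive implication is $(i)\Rightarrow(iii)$, and this is where I expect the main obstacle. The strategy is: assume $L$ is lower semimodular and solvable, and prove it is supersolvable by producing a one-dimensional restricted ideal and inducting (lower semimodularity passes to factor algebras, since the maximal restricted subalgebras of $L/I$ correspond to those of $L$ containing $I$, and intersections are preserved). To get the one-dimensional restricted ideal: since $L$ is solvable and $\phi_p$-behaviour is controlled, pick a minimal abelian restricted ideal $A$; lower semimodularity should force $\dim A=1$. The likely mechanism: if $\dim A>1$, choose a one-dimensional restricted subalgebra $U\subseteq A$ (if $A$ contains one — over an algebraically closed field every abelian restricted ideal contains a toral or a $p$-nilpotent one-dimensional restricted subalgebra by \cite[Chapter 2, Theorem 3.6]{sf}, as in the proof of Lemma~\ref{flag}); then for a suitable complement-type restricted subalgebra $B$, $U$ is maximal in $\langle U,B\rangle_p$ while $U\cap B$ fails to be maximal in $B$, contradicting lower semimodularity — unless $A$ is one-dimensional. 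The delicate part is choosing $B$ so that the semimodularity hypothesis genuinely bites; this may require splitting into cases according to whether $A$ is central or not, paralleling the case analysis in the proof of Proposition~\ref{p:solvable}, and using that $L$ splits over $A_psoc(L)$ via \cite[Theorems 3.4 and 4.2]{lt1}. Once every minimal abelian restricted ideal is one-dimensional, the flag is built ideal-by-ideal in the quotient, giving supersolvability.

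The main obstacle, then, is the implication $(i)\Rightarrow(iii)$: translating the failure of "$\dim A=1$" into a concrete violation of lower semimodularity requires a careful choice of the test pair $(U,B)$ and an understanding of how $\langle U,B\rangle_p$ sits inside $L$ when $U$ is one-dimensional but not a restricted ideal. I would handle this by reducing, via the splitting over $A_psoc(L)$, to the situation $L = A\dot{+}B$ with $A$ minimal abelian restricted and $B$ acting on $A$, and then analysing the lattice of restricted subalgebras between $U$, $B$, and $U+B$ directly; the toral-versus-$p$-nilpotent dichotomy for elements of $A$ (again via \cite[Chapter 2, Theorem 3.6]{sf}) is what makes the one-dimensional restricted subalgebra inside $A$ available in the first place.
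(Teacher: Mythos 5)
Your implication (ii)$\Rightarrow$(i) is the fatal gap. The Jordan--Dedekind chain condition (which is all that the $J$-algebra hypothesis gives) does \emph{not} imply lower semimodularity of a lattice: Birkhoff's theorem runs in the opposite direction (semimodular $\Rightarrow$ all maximal chains in an interval have equal length), and there are graded lattices that are not lower semimodular. Concretely, your displayed identity $d(U\cap B,B)=d(U\cap B,\langle U,B\rangle_p)-d(B,\langle U,B\rangle_p)$ is just additivity of the length function along the chain $U\cap B\subseteq B\subseteq \langle U,B\rangle_p$; combined with $d(U,\langle U,B\rangle_p)=1$ it gives $d(U\cap B,B)=d(U\cap B,U)+1-d(B,\langle U,B\rangle_p)$, and nothing forces $d(U\cap B,U)=d(B,\langle U,B\rangle_p)$ --- that equality is precisely the submodularity-type inequality you are trying to prove, and it requires algebraic input beyond the existence of a rank function. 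The paper avoids this entirely: it proves (i)$\Rightarrow$(ii) by citing Birkhoff (the legitimate direction), then (ii)$\Rightarrow$(iii) by showing that in a solvable $J$-algebra there is a $J$-series of length $\dim L$ (using solvability to get $\langle L^{(1)}\rangle_p\neq L$ plus Lemma~\ref{flag}), deducing that every maximal restricted subalgebra has codimension one, extending this to non-restricted maximal subalgebras via $H+\F x^{[p]}=L$, and invoking Barnes' theorem; finally (iii)$\Rightarrow$(i) is the easy step via Lemma~\ref{flag}, since codimension-one maximality forces $\langle U,B\rangle_p=U+B$.

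Your (i)$\Rightarrow$(iii) is also only a sketch with the key step missing: you never produce the test pair $(U,B)$ that turns $\dim A>1$ into a violation of lower semimodularity, and the natural candidates do not obviously work (e.g.\ if $B$ is a complement to a minimal abelian restricted ideal $A$ and $U\subseteq A$ is one-dimensional, then $\langle U,B\rangle_p=L$ and neither $U$ nor $B$ being maximal in $L$ yields a contradiction). Since the paper derives (iii) from (ii) rather than from (i) directly, there is no template in the paper for this route; you would have to supply the argument in full. Your (iii)$\Rightarrow$(ii) is essentially sound and close in spirit to the paper's use of Lemma~\ref{flag}, but as it stands the cycle of implications is broken at (ii)$\Rightarrow$(i).
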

\begin{proof} (i)$\Rightarrow$(ii): This is just a lattice theoretic result (see \cite[Theorem V3]{birkhoff}).
\par

\noindent (ii)$\Rightarrow$(iii): We first show by induction on $\dim L$ that there exists a series of restricted subalgebras from $0$ to $L$ having length $\dim L$. Suppose $L\neq 0$. As $L$ is solvable, 
 it holds that $\langle L^{(1)} \rangle_p \neq L$; otherwise, $L^{(1)}=\langle L^{(1)} \rangle_p^{(1)}=L^{(2)}\neq 0$, a contradiction. Then the inductive hypothesis ensures the existence of a series of restricted subalgebras \chn
$$
U=U_0\subsetneq U_1\subsetneq \ldots \subsetneq U_r=\langle L^\prime \rangle_p
$$
with $\dim U_i=i$ for all $0\leq i \leq r$. Moreover, as $L/\langle L^\prime \rangle_p$ is abelian, Lemma \ref{flag} yields the claim. 

Now, by hypothesis, all $J$-series of restricted subalgebras from $0$ to $L$ have length $\dim L$, and consequently all maximal restricted subalgebras have codimension one in $L$. On the other hand, if $H$ is a maximal subalgebra of $L$ which is not restricted, then pick an element $x$ of $H$ such that $x^{[p]}\notin H$. Then $H+\F  x^{[p]}$ is a subalgebra of $L$ properly containing $H$, so $H+\F  x^{[p]}=L$ by the maximality of $H$.  Therefore, every maximal subalgebra has codimension one in $L$, which allows to conclude that $L$ is supersolvable, by \cite[Theorem 7]{barnes}.

(iii)$\Rightarrow$(i): Let $U,B$ be restricted subalgebras of $L$ such that $U$ is maximal in $\langle U,B\rangle_p$. By Lemma \ref{flag}, $U$ has codimension 1 in $\langle U,B\rangle_p$, which forces $\langle U,B\rangle_p =U+B$. It follows that $\dim(B/(U\cap B))=\dim((U+B)/U)=1$, whence $U\cap B$ is maximal in $B$, completing the proof.
\end{proof}

Note that the assumption of solvability is actually needed in the previous result. In fact, consider the restricted Lie algebra $L=\mathfrak{sl}(2,\F )$ over an algebraically closed field $\F $ of characteristic $p>2$. Then all $J$-series of restricted subalgebras of $L$ have length 3, despite the fact that $L$ is simple.

\section{Upper semimodular restricted Lie algebras}

Let $L$ be a restricted Lie algebra.
We say that a restricted subalgebra $S$ of $L$ is {\em upper semimodular} in $L$ if $S$ is maximal in $\langle S,T\rangle_p$ for every restricted subalgebra $T$ of $L$ such that $S\cap T$ is maximal in $T$. The restricted Lie algebra $L$ is called {\em  upper semimodular} if all of its restricted subalgebras are upper semimodular in $L$.

This section is devoted to study the structure of upper semimodular restricted Lie algebras over algebraically closed fields. In particular, our main aim of this section is to prove the following result:

\begin{theor}\label{main:upper}
	Let $L$ be a restricted Lie algebra over an algebraically closed  field. The following conditions are equivalent:
	\begin{enumerate}
\item[(i)] $L$ is upper semimodular;
\item[(ii)] $L$ is modular;
\item[(iii)] every restricted subalgebra of $L$ is a restricted quasi-ideal.
	\end{enumerate} Moreover, if one of the previous statements holds, then $L$ is either almost abelian or nilpotent of class at most $2$. 
\end{theor}


We start with some preliminary results.
%


Let $L$ be an almost abelian Lie algebra over a field $\F $ of characteristic $p>0$. Then  $L=\F x \dot{+} A$, where $A$ is an abelian ideal and $\ad(x)$ acts as the identity map on $A$. It is immediate to check that $L$ is restrictable and also centerless, so it admits a unique $p$-mapping by~\cite[Chapter 2, Corollary 2.2]{sf}. Explicitly, this $p$-mapping is given by $a^{[p]}=0$ for all $a\in A$ and $x^{[p]}=x$.


\begin{lemma}\label{prop:min}
	Let $L$ be an upper semimodular restricted Lie algebra over an algebraically closed field of characteristic $p>0$. If $L$ is generated by two distinct one-dimensional restricted subalgebras $X$ and $Y$, then $L$ is two-dimensional. 
	
%
	\end{lemma}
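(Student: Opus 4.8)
The plan is to analyze the restricted subalgebra $L=\langle X,Y\rangle_p$ directly, using upper semimodularity applied to the maximal-in-$T$ condition where $T$ ranges over the one-dimensional restricted subalgebras themselves. First I would observe that, since $X$ and $Y$ are one-dimensional and distinct, $X\cap Y=0$ is maximal in both $X$ and $Y$; hence, by upper semimodularity applied to $S=X$, $T=Y$ (and symmetrically), $X$ is maximal in $\langle X,Y\rangle_p=L$ and $Y$ is maximal in $L$. So $L$ has two distinct maximal restricted subalgebras of dimension one. Writing $X=\F x$, $Y=\F y$ with $x,y$ linearly independent, the task reduces to showing $L=\F x\dot{+}\F y$, i.e. that $[x,y]\in\F x+\F y$ and that $x^{[p]},y^{[p]}\in \F x+\F y$; equivalently, that no further elements are forced into $L$ by taking brackets and $p$-th powers.

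The key step is to exploit the maximality of $X=\F x$ in $L$: for any restricted subalgebra $V$ with $X\subsetneq V$ we must have $V=L$, so in particular $\langle x, z\rangle_p=L$ for every $z\in L\setminus X$. I would combine this with the semisimple/$p$-nilpotent decomposition available over an algebraically closed field: by \cite[Chapter 2, Theorem 3.5]{sf} each cyclic restricted subalgebra splits as torus $\oplus$ $p$-nilpotent part, and by \cite[Chapter 2, Theorem 3.6]{sf} a torus over an algebraically closed field is spanned by toral elements. So I would first handle the case where $x$ (or $y$) fails to be either semisimple or $p$-nilpotent: then $\langle x\rangle_p$ is at least two-dimensional and contains a proper nonzero restricted subalgebra, contradicting maximality of $\F x$ together with the fact that $\F x$ being one-dimensional restricted forces $\langle x\rangle_p=\F x$, i.e. $x^{[p]}\in\F x$. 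Hence both $x$ and $y$ satisfy $x^{[p]}\in\F x$, $y^{[p]}\in\F y$; rescaling, each is toral or $p$-nilpotent of order one (i.e. $x^{[p]}=0$).

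With $x^{[p]},y^{[p]}\in \F x+\F y$ secured, it remains to show $[x,y]\in\F x+\F y$. Suppose not; set $z=[x,y]$, so $x,y,z$ are linearly independent. The idea is to find a one-dimensional restricted subalgebra $T$ strictly between $0$ and some cyclic subalgebra so that $S\cap T$ is maximal in $T$ for a suitable $S$, then derive that $S$ must be maximal in $\langle S,T\rangle_p$, and push this to a contradiction with the two-generated structure — concretely, $\langle x, x+\lambda z\rangle_p$ type computations, or consideration of $\langle z\rangle_p$ and its interaction with $\F x$, using that $\F x$ maximal forces $\langle x,w\rangle_p=L$ for any $w\notin\F x$ while the bracket relations in a two-generator algebra with the given $p$-powers keep dimensions small. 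I would also invoke Proposition~\ref{da}/the structure of low-dimensional restricted algebras if the span of $x,y,z$ turns out to already be a subalgebra, ruling out the almost-abelian and nilpotent-of-class-$2$ possibilities as being genuinely generated by \emph{two} one-dimensional restricted subalgebras: in the almost abelian case $L=\F x\dot{+}A$ with $\dim A\ge 2$ cannot be generated by two one-dimensional restricted subalgebras since any such would lie in $\F x\dot{+}\F a$ or in $A$; in the nilpotent class-$2$ case a similar dimension count applies. The main obstacle I anticipate is the bookkeeping in the non-split/characteristic-$2$ subtleties (where $s_i(x,y)$ terms in the $p$-th-power formula appear and where three-dimensional non-split simple algebras live), so I would be careful to use algebraic closure to kill exactly those, and to treat the toral-versus-$p$-nilpotent subcases of $x,y$ separately, the most delicate being when one of them is toral and acts non-trivially, echoing the almost-abelian $p$-mapping computation $x^{[p]}=x$, $[x,y]=y$ recalled just before the lemma.
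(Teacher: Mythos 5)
Your opening step is correct and coincides with the paper's: since $X\cap Y=0$ is maximal in the one-dimensional algebra $Y$, upper semimodularity makes $X$ maximal in $\langle X,Y\rangle_p=L$, and symmetrically for $Y$. But from that point on the proposal has a genuine gap: the central claim $[x,y]\in\F x+\F y$ is never actually proved. The paragraph beginning ``Suppose not; set $z=[x,y]$'' only describes an intention (``the idea is to find a one-dimensional restricted subalgebra $T$ \dots and push this to a contradiction''), with no concrete choice of $S$ and $T$ and no computation; as it stands there is no argument there. Worse, the proposed fallback --- invoking the classification of upper semimodular algebras as almost abelian or nilpotent of class at most $2$ to rule out a three-dimensional $L$ --- is circular: that classification (Theorem~\ref{main:upper} via Proposition~\ref{prop:clas} and its successors) is proved \emph{later} and its proof rests on this very lemma. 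Likewise Proposition~\ref{da} concerns dually atomistic algebras and is not applicable here.

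The paper closes the gap differently, and this is the ingredient you are missing. Rather than computing brackets, one shows that \emph{every} nonzero proper restricted subalgebra $Z$ of $L$ is one-dimensional: if $X\subseteq Z$ and $Y\not\subseteq Z$, then maximality of $X$ forces $Z=X$; if neither $X$ nor $Y$ lies in $Z$, pick a one-dimensional restricted subalgebra $Z'\subseteq Z$, note $\langle X,Z'\rangle_p=L$ by the first case, and apply upper semimodularity to $X\cap Z'=0$ maximal in $X$ to conclude $Z'$ is maximal in $L$, whence $Z=Z'$. One then cites \cite[Lemma 1.6]{Z} to deduce that a restricted Lie algebra all of whose nonzero proper restricted subalgebras are one-dimensional is two-dimensional. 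Without that external result (or an equivalent substitute) knowing merely that $X$ and $Y$ are one-dimensional and maximal does not by itself bound $\dim L$, so your proof is incomplete.
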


\begin{proof}
	Let $Z$ be a nonzero proper restricted subalgebra of $L$. Assume first that $X\subseteq Z, Y\not\subseteq  Z$. As $X\cap Y =0$ is maximal in $Y$, $X$ must be maximal in $L$, yielding $Z=X$. 
	Assume now that $X,Y\not\subseteq Z$ and take a one-dimensional restricted subalgebra $Z'$ of $Z$. By the previous case, 
	 $\langle X,Z'\rangle_p=L$. Since $X\cap Z'=0$ is maximal in $X$, $Z'$ is maximal in $L$ and $Z=Z'$. Thus, all nonzero proper restricted subalgebras of $L$ are one-dimensional,  and it follows from~\cite[Lemma 1.6]{Z} that $L$ is two-dimensional.   
\end{proof}






\begin{lemma}\label{lema:center}
	Let $\F $ be an algebraically closed field of characteristic $p>0$.
	Let $L$ be a non-abelian upper semimodular restricted Lie algebra over $\F $ generated by three one-dimensional restricted subalgebras. Then, $L$ is centerless.
\end{lemma}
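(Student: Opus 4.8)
\textbf{Proof plan for Lemma~\ref{lema:center}.}
The plan is to suppose, for contradiction, that $Z(L)\neq 0$ and derive a structural contradiction from the hypothesis that $L$ is non-abelian, upper semimodular, and generated by three one-dimensional restricted subalgebras $X$, $Y$, $W$. First I would observe that, since $\F$ is algebraically closed, $Z(L)$ is spanned by semisimple and $p$-nilpotent elements, and in fact one can pull a one-dimensional restricted central ideal $C$ out of $Z(L)$: either $C$ is toral or $C=\F c$ with $c^{[p]}=0$. I would then pass to $\bar L=L/C$, which is again upper semimodular (factor algebras inherit the property) and is generated by the images $\bar X,\bar Y,\bar W$ of the three generators, so it is generated by at most three one-dimensional restricted subalgebras. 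Since $\dim \bar L<\dim L$, I would like to set up an induction on dimension: if $\bar L$ is abelian then $L$ has derived algebra inside $C$, hence $\dim L^2\le 1$, and I would analyse this low-dimensional situation directly; if $\bar L$ is non-abelian then by induction $\bar L$ is centerless, which already says $Z(L)=C$ is one-dimensional, and I would work to contradict even this.

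The core of the argument should be a careful case analysis of the pair (type of the central ideal $C$, structure of $L^2$). If $\dim L^2\le 1$ and $L^2\subseteq Z(L)$, then $L$ is nilpotent of class $2$ with one-dimensional commutator; here I would use upper semimodularity together with Lemma~\ref{prop:min} to force a contradiction, the point being that in such an $L$ one can exhibit a one-dimensional restricted subalgebra $S$ and a two-dimensional restricted subalgebra $T$ with $S\cap T$ maximal in $T$ (i.e. $S\cap T=0$) but $S$ \emph{not} maximal in $\langle S,T\rangle_p$, because $\langle S,T\rangle_p$ is generically three-dimensional when $[S,T]\not\subseteq S+T$. The remaining possibility is that $L^2\not\subseteq C$; then $\bar L=L/C$ is non-abelian, hence centerless by the inductive hypothesis, so $C=Z(L)$ and $C_L(Z(L))$-type considerations apply. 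I would combine this with the root-space machinery: by Corollary~\ref{nilpc} (available once we know every restricted subalgebra is a quasi-ideal — but since at this stage of the paper we only have upper semimodularity, I would instead argue directly) $L$ decomposes relative to a maximal torus $T$, and a nonzero central element forces either a nontrivial toral summand commuting with everything or a central $p$-nilpotent element, each of which lets me find two one-dimensional restricted subalgebras whose span is three-dimensional, contradicting Lemma~\ref{prop:min} applied to a suitable rank-two subalgebra.

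I expect the main obstacle to be the bookkeeping in the mixed case where $C$ is toral and $L$ has both semisimple and $p$-nilpotent directions among its three generators: one must rule out configurations like $L\cong \F t\oplus \mathfrak{n}$ with $t$ toral central and $\mathfrak{n}$ a small nilpotent algebra, by showing these are \emph{not} upper semimodular. The clean way to do this is the general principle, which I would isolate as the engine of the proof: if $L$ is upper semimodular and $S$ is a one-dimensional restricted subalgebra with $S\cap T$ maximal in a restricted subalgebra $T$, then $\dim\langle S,T\rangle_p=\dim T+1$; applying this with $\dim T=1$ recovers Lemma~\ref{prop:min}, and applying it with $\dim T=2$ and $S$ chosen inside the center shows $[S,T]\subseteq S+T$ whenever $S+T$ has dimension $\le 3$, which for $S\subseteq Z(L)$ is automatic and hence gives no information — so the real leverage comes from choosing $S$ \emph{transverse} to the center and exploiting that $\langle S,T\rangle_p$ must stay $3$-dimensional, pinning down the $[p]$-operator and the bracket so tightly that a nonzero center becomes impossible. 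Finally I would dispose of the base cases $\dim L\le 3$ by the explicit classification of small restricted Lie algebras generated by few one-dimensional restricted subalgebras, citing \cite[Lemma 1.6]{Z} and \cite{Z} as in Lemma~\ref{prop:min}.
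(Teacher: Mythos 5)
There is a genuine gap: your plan never actually produces the pair of restricted subalgebras that violates upper semimodularity, and it trails off (``pinning down the $[p]$-operator and the bracket so tightly that a nonzero center becomes impossible'') exactly where all the work lies. The observation you miss, and which makes the induction on dimension, the passage to $L/C$ and the root-space machinery unnecessary, is that Lemma~\ref{prop:min} already forces $\dim L\le 3$: each pair of the three generating one-dimensional restricted subalgebras spans a two-dimensional restricted subalgebra, so $X+Y+W$ is itself a restricted subalgebra and equals $L$. Since $L$ is non-abelian, some pair, say $\langle y,z\rangle_p$, is almost abelian ($[y,z]=z$, $y^{[p]}=y$, $z^{[p]}=0$); as this plane is centerless it meets $Z(L)$ trivially, so the third generator may be replaced by a central $x$ with $x^{[p]}=0$ or $x^{[p]}=x$, and the structure of $L$ is completely explicit. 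The paper then writes down concrete witnesses: if $x^{[p]}=0$, take $S=\F y$ and $T=\langle x+z\rangle_p=\F (x+z)$, so that $S\cap T=0$ is maximal in $T$ while $\F y\subsetneq\langle y,z\rangle_p\subsetneq\langle y,x+z\rangle_p=L$ shows $S$ is not maximal; if $x$ is toral, take $S=\langle y+z\rangle_p$ and $T=\langle x+y\rangle_p$, using $x\in\langle x-z\rangle_p\subseteq\langle x+y,y+z\rangle_p=L$ and the fact that $\langle y+z\rangle_p$ is one-dimensional, hence not maximal in $L$. Your proposal only asserts that such a configuration exists ``generically''.

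Beyond the missing construction, two of your stated principles are incorrect. Upper semimodularity does not give $\dim\langle S,T\rangle_p=\dim T+1$; it gives maximality of $S$ in $\langle S,T\rangle_p$, and converting maximality into a codimension count is not free (compare the effort in Proposition~\ref{p:equiv}). In your nilpotent case you propose a two-dimensional $T$ with $S\cap T=0$ maximal in $T$; but over an algebraically closed field every nonzero restricted Lie algebra contains a one-dimensional restricted subalgebra (a toral element or a $p$-nilpotent element of order one), so $0$ is never maximal in a two-dimensional restricted subalgebra and this configuration is unusable. Finally, the branch where $L/C$ is non-abelian and $Z(L)=C$ is one-dimensional ends with an appeal to Corollary~\ref{nilpc}, which you correctly note is unavailable at this stage, followed by ``I would instead argue directly'' --- but that direct argument is precisely the content of the lemma.
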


\begin{proof}
	Let $\F x$, $\F y$, $\F z$ be three distinct one-dimensional restricted subalgebras generating $L$ and suppose, by contradiction, that $Z(L)\neq 0$. Note that 
	we can take $x$ to be either toral or such that $x^{[p]}=0$. By Lemma~\ref{prop:min} and without loss of generality, we may also assume $x\in Z(L)$ and that $\langle y,z\rangle_p$ is almost abelian, with $[y,z]=z$, $y^{[p]}=y$ and $z^{[p]}=0$. 
If $x^{[p]}=0$, then 
$\langle x+z\rangle_p\cap \F y=0$ is maximal in $\langle x+z\rangle_p$, but $\F y$ is not maximal in $\langle x+z,y\rangle_p=L$, a contradiction. 
On the other hand, if $x$ is toral, then
\[x\in \langle x-z\rangle_p\subseteq \langle x+y,y+z\rangle_p,\] so $\langle x+y,y+z\rangle_p=L$. Now $\langle x+y\rangle_p\cap\langle y+z\rangle_p =0$ is maximal in $\langle x+y\rangle_p$, but  $\langle y+z\rangle_p$ is not maximal in $L$, a contradiction. 
\end{proof}

\begin{prop}\label{prop:clas}
	Let $\F $ be an algebraically closed field of characteristic $p>0$. Any upper semimodular restricted Lie algebra $L$ over $\F $ generated by its one-dimensional restricted subalgebras is either abelian or almost abelian.
\end{prop}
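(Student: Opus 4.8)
\textbf{Proof plan for Proposition \ref{prop:clas}.}
The strategy is to reduce the general case to the case of algebras generated by few one-dimensional restricted subalgebras, where Lemmas \ref{prop:min} and \ref{lema:center} already do most of the work. First I would dispose of the trivial situations: if $L$ is abelian there is nothing to prove, and if $\dim L\leq 2$ the statement is immediate (a two-dimensional non-abelian restricted Lie algebra is almost abelian). So assume $L$ is non-abelian with $\dim L\geq 3$. Since $L$ is generated by its one-dimensional restricted subalgebras, one can pick such subalgebras $X_1,\dots,X_k$ that generate $L$ with $k$ minimal; minimality of $k$ together with upper semimodularity forces $k=3$, because by Lemma \ref{prop:min} any two of them generate at most a two-dimensional subalgebra, and if that subalgebra already had dimension $2$ one could hope to absorb it. More carefully, I would argue directly: it suffices to show that every subalgebra $M=\langle X,Y,Z\rangle_p$ generated by three one-dimensional restricted subalgebras inside $L$ is almost abelian (or abelian), and then piece these together.

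The heart of the argument is therefore the three-generated case. Let $L=\langle \F x,\F y,\F z\rangle_p$ be non-abelian and upper semimodular. By Lemma \ref{lema:center}, $Z(L)=0$, so $L$ is centerless and restrictable, hence (if almost abelian) carries the unique $p$-mapping described before Lemma \ref{prop:min}. Using Lemma \ref{prop:min}, each pairwise-generated subalgebra $\langle \F x,\F y\rangle_p$ is at most two-dimensional, hence either abelian or almost abelian. Not all three pairwise subalgebras can be abelian (else $L$ would be abelian since $x,y,z$ commute pairwise and their $[p]$-th powers lie in the subalgebras, forcing $L$ abelian by the generation hypothesis — here I would invoke the centerless conclusion to pin down the $p$-powers). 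So, relabelling, $\langle \F y,\F z\rangle_p$ is almost abelian, say with $[y,z]=z$, $y^{[p]}=y$, $z^{[p]}=0$. Now I would run through the possibilities for how $\F x$ interacts with this plane, using the upper-semimodularity test on cleverly chosen one-dimensional subalgebras such as $\langle x+z\rangle_p$, $\langle x+y\rangle_p$, $\langle y+z\rangle_p$ exactly as in the proof of Lemma \ref{lema:center}; these comparisons should force $x$ into the ``almost abelian pattern'': either $x$ is a scalar multiple of $y$ modulo the abelian part, or $x$ lies in the abelian ideal $A=\F z$ extended suitably. The outcome is that $L=\F y\dot{+}A$ with $A$ an abelian ideal on which $\ad(y)$ is the identity, i.e. $L$ is almost abelian of dimension $3$; repeating, any larger one-dimensionally-generated upper semimodular algebra is almost abelian.

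To globalize from three generators to all of $L$: write $L=\langle \F x_1,\dots,\F x_m\rangle_p$ and induct, showing that $\langle \F x_1,\dots,\F x_{j}\rangle_p$ is abelian or almost abelian for each $j$. At each step we have an almost abelian $H=\F b\dot{+}A$ (with $b^{[p]}=b$, $A$ abelian, $\ad(b)=\mathrm{id}$ on $A$) and must add a new one-dimensional restricted subalgebra $\F w$. Applying the three-generated result to $\langle \F b,\F a,\F w\rangle_p$ for $a\in A$, and using that $H$ is generated by its one-dimensional restricted subalgebras (each $\F a$ with $a\in A$ is restricted since $a^{[p]}=0$, and $\F b$ is toral), one concludes $\langle H,\F w\rangle_p$ is again almost abelian. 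The main obstacle I anticipate is this gluing step: controlling the $p$-operation on the enlarged algebra and making sure the new element $w$ cannot create a second ``toral-like'' direction or an extra root — i.e. ruling out configurations where $\ad(b)$ and $\ad(w)$ act with different nonzero eigenvalues on pieces of the abelian part, which would contradict upper semimodularity via a subalgebra like $\langle b+w\rangle_p$. I expect that a careful application of the upper-semimodular condition to one-dimensional subalgebras of the form $\langle b+w\rangle_p$ and $\langle a+w\rangle_p$, combined with the centerless conclusion of Lemma \ref{lema:center}, closes this gap, but it is the step requiring the most care.
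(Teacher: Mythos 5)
Your overall strategy (lean on Lemmas~\ref{prop:min} and~\ref{lema:center}, settle the case of few generators, then extend) is reasonable in outline, but the two steps that actually carry the proof are left as unproven sketches, and the second one is a genuine gap. First, your treatment of the three-generated case ends with the assertion that the upper-semimodularity tests ``should force $x$ into the almost abelian pattern'': that is precisely the content to be proved, and no argument is given. Second, and more seriously, the globalization step is not merely ``the step requiring the most care'' --- as stated it does not obviously work. Even granting that every restricted subalgebra generated by three one-dimensional restricted subalgebras is abelian or almost abelian, you must still show that the toral directions of the various almost abelian triples are compatible and, crucially, that the non-toral generators commute with one another; nothing in your plan produces this, and controlling the $p$-map on the enlarged algebra (which is what you flag as the obstacle) is not where the real difficulty lies. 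Incidentally, your remark that minimality of the generating set forces $k=3$ is false: an almost abelian algebra of large dimension needs many one-dimensional restricted generators.

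The paper avoids both problems with a direct global argument whose key ingredient is absent from your proposal. Fix one almost abelian pair $\langle y,x_1\rangle_p$ with $[y,x_1]=x_1$, $y^{[p]}=y$, $x_1^{[p]}=0$ (if no such pair exists, all generators commute and $L$ is abelian), and write $L=\langle y,x_1,\dots,x_s\rangle_p$. Lemma~\ref{lema:center} is used only negatively, to derive contradictions from assumed centrality: if $[y,x_i]=0$, then $[x_1,x_i]=\lambda x_1\neq 0$ and $y+\lambda^{-1}x_i$ would be central in $\langle y,x_1,x_i\rangle_p$. After normalizing, this gives $[y,x_i]=x_i$ for every $i$. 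The decisive step is then the Jacobi identity: writing $[x_i,x_j]=\alpha_{ij}x_i+\beta_{ij}x_j$ (possible by Lemma~\ref{prop:min}), one computes
\[
0=[[y,x_i],x_j]+[[x_i,x_j],y]+[[x_j,y],x_i]=\alpha_{ij}x_i+\beta_{ij}x_j,
\]
so all the $x_i$ commute and $L=\langle x_1,\dots,x_s\rangle_p\dot{+}\F y$ is almost abelian. This Jacobi computation is exactly what replaces your gluing step, and it is the idea your proposal lacks.
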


\begin{proof}
	By Lemma~\ref{prop:min}, all the restricted subalgebras of $L$ generated by two one-dimensional restricted subalgebras are abelian or almost abelian. Suppose that $\langle y, x_1\rangle_p$ is almost abelian, where $\F y, \F x_1$ are restricted subalgebras of $L$ with $[y,x_1]=x_1$, $y^{[p]}=y$ and $x_1^{[p]}=0$. Write $L=\langle y, x_1,\dots,x_s\rangle_p$, where $y,x_1,\dots,x_s$ are linearly independent.
	We claim that $\langle y, x_i\rangle_p$ is almost abelian for $i=2,\dots,s$. Suppose otherwise that $[y,x_i]=0$ for some $i\neq 1$. By Lemma~\ref{lema:center}, we must have $[x_1,x_i]\neq 0$. Then $\langle x_1,x_i\rangle_p$ would be almost abelian 
	and 
	$[x_1,x_i]=\lambda x_1$ for some $\lambda\in \F $, $\lambda\neq 0$. But then $y+\lambda^{-1}x_i\in Z(\langle y,x_1,x_i\rangle_p)=0$ by Lemma~\ref{lema:center}, a contradiction. 
	Note also that $[y,x_i]\notin \F  y$, as otherwise $y^{[p]}=0$. Therefore,  we can clearly 
	assume that $[y,x_i]=x_i$.  
	For $i\neq j$ write $[x_i,x_j]=\alpha_{ij}x_i+\beta_{ij}x_j$. 
	We have
	\begin{align*}
	0&=[[y,x_i],x_j]+[[x_i,x_j],y]+[[x_j,y],x_i]\\
	&=\alpha_{ij}x_i+\beta_{ij}x_j - \alpha_{ij}x_i-\beta_{ij}x_j +\alpha_{ij}x_i+\beta_{ij}x_j\\
	&=\alpha_{ij}x_i+\beta_{ij}x_j,
	\end{align*}
	hence $\alpha_{ij}=\beta_{ij}=0$.
	
	
	Therefore, $L=\langle x_1,\dots,x_s\rangle_p\dot{+} \F y$ is an almost abelian restricted Lie algebra of dimension $s+1$, as desired.
\end{proof}

Note that the hypothesis of $\F $ being algebraically closed is essential for our results. Indeed, the Lie algebra $L$ over a perfect field of characteristic $3$ given by Gein in~\cite[Example 2]{gein},  with the $p$-mapping indicated in Section~\ref{s:da}, is upper semimodular, generated by its minimal restricted subalgebras and semisimple. 
The reader could ask if, ruling out the hypothesis of $\F $ being algebraically closed, any  upper semimodular restricted Lie algebra generated by its minimal restricted subalgebras would be abelian, almost abelian or semisimple, in a way somehow similar to the situation in the ordinary Lie algebra setting (see~\cite{gein2}). 
However, this is not the case either: the restricted Lie algebra $\F x \oplus L$, with $x^{[p]}=0$, is generated by its minimal restricted subalgebras and it is  upper semimodular, but it is neither abelian, nor almost abelian, nor semisimple. 
Furthermore, it is even possible to pick a  modular restricted subalgebra of  $\F x \oplus L$ which does not lie in any of these three cases.

\begin{prop}\label{prop:usmalab}
	Let $\F $ be an algebraically closed field of characteristic $p>0$, and let $L$ be an upper semimodular restricted Lie algebra over $\F $. Let $B$ be the restricted subalgebra generated by the one-dimensional restricted subalgebras of $L$. If $B$ is almost abelian, then $L=B$.
\end{prop}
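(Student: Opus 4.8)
The goal is to show that if $B = \langle \text{one-dimensional restricted subalgebras of } L\rangle_p$ is almost abelian, then $B$ exhausts $L$. Write $B = \F y \dot{+} A$ with $A$ abelian, $\ad(y)$ acting as the identity on $A$, $y^{[p]}=y$, and $a^{[p]}=0$ for all $a\in A$ (this is the unique $p$-mapping on an almost abelian algebra, as recorded just before Lemma~\ref{prop:min}). The plan is to argue by contradiction: assume $B \subsetneq L$ and derive a violation of upper semimodularity. First I would reduce to the case $L = \langle B, w\rangle_p$ for a single element $w\notin B$, since it suffices to show no such $w$ can exist. Note $\langle w\rangle_p$ cannot be one-dimensional (else $\langle w\rangle_p \subseteq B$), so $w$ is neither toral nor $p$-nilpotent of order $1$; in any case $\langle w\rangle_p$ has dimension $\geq 2$.

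The key step is to produce a one-dimensional restricted subalgebra $X$ of $L$ (necessarily $X\subseteq B$) and a restricted subalgebra $T$ with $X\cap T$ maximal in $T$ but $X$ not maximal in $\langle X,T\rangle_p$, contradicting upper semimodularity. The natural candidates come from the one-dimensional subalgebras living inside $B$: the line $\F y$ (toral) and the lines $\F a$, $a\in A$ ($p$-nilpotent of order $1$), as well as $\F(y+a)$. The idea is to exploit that $w$ together with these lines generates a subalgebra strictly between a one-dimensional piece and $L$ of the "wrong" size. A cleaner route may be to invoke Proposition~\ref{prop:clas}: since $B$ is generated by one-dimensional restricted subalgebras and is almost abelian, and $L$ is upper semimodular, one would like to show $L$ itself is generated by its one-dimensional restricted subalgebras — but that is precisely what fails if $B\subsetneq L$, so instead one should show that $w$ must interact with $A$ or $y$ in a way that forces a new one-dimensional subalgebra outside $B$ or a semimodularity failure. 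Concretely: consider the action of $\ad(w)$ and of $w^{[p]}$ on the setup. Since $\langle w, y\rangle_p \supseteq \langle w\rangle_p$ and $y$ is toral with $\ad(y)=\mathrm{id}$ on $A$, one examines $\langle w+a\rangle_p$ and $\langle y\rangle_p$ or $\langle w+y\rangle_p$ and $\langle a\rangle_p$, mimicking the almost-verbatim arguments in Lemmas~\ref{prop:min} and~\ref{lema:center}, to find the contradiction.

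I expect the main obstacle to be the bookkeeping of $[p]$-powers of the new generator $w$ relative to $B$: one must control $w^{[p]}, w^{[p]^2},\ldots$ modulo $A$ and modulo $\F y$, and show that no matter how $\ad(w)$ decomposes (into a semisimple and a $p$-nilpotent part acting on $B$, using \cite[Chapter 2, Theorem 3.5]{sf}), one always lands in a configuration where some one-dimensional line $X\subseteq B$ meets some two-dimensional $T = \langle X', w\rangle_p$ in a maximal-but-not-comaximal way. In particular the toral case (where $w^{[p]}$ might be semisimple and land in a torus containing $y$) and the $p$-nilpotent case (where $\langle w\rangle_p$ has no toral elements and meets $A$) likely need separate treatment, parallel to the bifurcation "$x$ toral / $x^{[p]}=0$" in Lemma~\ref{lema:center}. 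Once that dichotomy is handled, the contradiction with upper semimodularity should close the argument and force $L=B$.
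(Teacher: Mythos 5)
Your proposal correctly sets up the framework (the unique $p$-mapping on the almost abelian $B=\F y\dot{+}A$, the reduction to a single generator $w\notin B$ with $\dim\langle w\rangle_p\geq 2$, and the general plan of exhibiting a failure of upper semimodularity), but it stops exactly where the actual proof begins: you never produce the concrete pair of restricted subalgebras that violates semimodularity, and you say so yourself (``I expect the main obstacle to be\dots'', ``should close the argument''). This is a genuine gap, not a stylistic difference. The candidates you float --- $\langle w+a\rangle_p$ against $\F y$, or $\langle w+y\rangle_p$ against $\F a$, in imitation of Lemma~\ref{lema:center} --- are not the ones that work, and the proposed dichotomy ``$w$ toral versus $w$ $p$-nilpotent'' is a red herring: over an algebraically closed field any semisimple element already lies in $B$ (its cyclic restricted subalgebra is a torus, hence spanned by toral elements), so the only relevant case is the $p$-nilpotent one, and the first step should be to \emph{normalise} the outside element. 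That is what the paper does via Proposition~\ref{prop:atom}: if $L\neq B$ there is a $p$-nilpotent element $x$ of order exactly $2$, so that $\F x^{[p]}$ is a one-dimensional restricted subalgebra, forcing $x^{[p]}\in A$ and hence $\ad^p(x)(y)=[x^{[p]},y]=-x^{[p]}\neq 0$. This nonvanishing is the quantity the whole contradiction is built around, and it is absent from your outline.

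The second missing idea is the specific semimodularity application and the squeeze that follows. One applies upper semimodularity to $\langle x\rangle_p$ and $T=\langle x^{[p]},y\rangle_p=\F x^{[p]}+\F y$: their intersection $\F x^{[p]}$ is maximal in $T$, so $\langle x\rangle_p$ is maximal in $\langle x,y\rangle_p$. Setting $w=\ad^{p-1}(x)(y)$, one has $[x,w]=-x^{[p]}\neq 0$, whence $w\notin\langle x\rangle_p$ and
\[
\langle x\rangle_p\subsetneq\langle x,w\rangle_p\subseteq\langle x,y\rangle_p ,
\]
so maximality forces $y\in\langle x,w\rangle_p=\langle x\rangle_p+\langle w\rangle_p$; since $\ad(x)$ kills $\langle x\rangle_p$, $w^{[p]}$ and $x^{[p]}$, this gives $[x,y]=\lambda[x,w]=-\lambda x^{[p]}$, and applying $\ad^{p-1}(x)$ yields $\ad^p(x)(y)=0$, contradicting $\ad^p(x)(y)=-x^{[p]}$. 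Your worry about ``bookkeeping of $w^{[p]},w^{[p]^2},\dots$ modulo $A$ and $\F y$'' for an arbitrary generator is precisely the difficulty that this choice of $x$ and of the auxiliary element $\ad^{p-1}(x)(y)$ is designed to avoid; without some such device the argument as you sketch it does not close.
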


\begin{proof}
	Assume  $L\neq B$. By Proposition~\ref{prop:atom}, there exists a $p$-nilpotent element $x\in L$ of order $2$. Write $B=A\dot{+}\F y$, where $A$ is a strongly abelian restricted ideal of $B$, and $y$ is a toral element which acts as the identity map on $A$. Since $x^{[p]}\in A$, we have $\ad^p(x)(y)=[x^{[p]},y]=-x^{[p]}$. Set $w=\ad^{p-1}(x)(y)$, and note that $[x,w]=-x^{[p]}$ and $[x^{[p]},w]=[x,w^{[p]}]=0$.
	
	As $\langle x \rangle_p \cap \langle x^{[p]},y \rangle_p = \F x^{[p]}$ is maximal in $\langle x^{[p]},y \rangle_p=\F x^{[p]}+\F y$, one has that $\langle x \rangle_p$ must be maximal in $\langle x,x^{[p]},y \rangle_p= \langle x,y \rangle_p$. We have 
	
	\[\langle x \rangle_p\subsetneq \langle x,w \rangle_p \subseteq \langle x,y \rangle_p.\] 
	It follows that $y\in \langle x,w \rangle_p= \langle x \rangle_p + \langle w \rangle_p$, from which $[x,y]=\lambda[x,w]=-\lambda x^{[p]}$, for some $\lambda\in \F $. But then \[-x^{[p]}=\ad^{p}(x)(y)=-\lambda \ad^{p-1}(x)(x^{[p]})=0,\] a contradiction. Therefore, $L=B$ and $L$ is almost abelian. 
\end{proof}

\begin{theor}\label{th:uppergral}
	Let $\F $ be an algebraically closed field of characteristic $p>0$. Any  upper semimodular restricted Lie algebra $L$ over $\F $ is either abelian, almost abelian or of the form
	\[L=\langle x_1,\dots,x_r,B\rangle_p,\]
	where $x_i$ is $p$-nilpotent of order $n_i>1$ for all $i=1,\dots,r$, $B$ is an abelian restricted subalgebra and $[L,L]\subseteq \langle x_1,\dots,x_r\rangle_p$.
\end{theor}

\begin{proof}
	Let $B$ be the restricted subalgebra generated by the one-dimensional restricted subalgebras of $L$. By Proposition~\ref{prop:clas}, $B$ is either abelian or almost abelian. If $L\neq B$, then $B$ is abelian by Proposition~\ref{prop:usmalab}, and every $x_i\notin B$ is $p$-nilpotent of order $n_i>1$ by Proposition~\ref{prop:atom}. 
	
	To prove that $[L,L]\subseteq \langle x_1,\dots,x_r\rangle_p$, it suffices to see that $[x_i,b]\in \langle x_i\rangle_p$, for $i=1,\dots,r$ and $b\in B$ such that $\langle b\rangle_p$ 
	is one-dimensional.
	Take such a $b\in B$. If $b\in \langle x_i \rangle_p$, then we are done. Otherwise,  $\langle x_i \rangle_p \cap \langle b \rangle_p=0$ is maximal in $ \langle b \rangle_p=\F b$, and then $ \langle x_i \rangle_p$ must be maximal in  $\langle x_i,b \rangle_p$.
	Write $w=\ad^{r-1}(x_i)(b)\neq 0$, where $r$ is such that $\ad^r(x_i)(b)=0$. We have the following chain of inclusions 
	\[ \langle x_i \rangle_p \subseteq  \langle x_i,w \rangle_p\subsetneq  \langle x_i,b \rangle_p.\]
	Then, $w\in  \langle x_i \rangle_p$. 
	Assume now that $\ad^{r-k}(x_i)(b)\in \langle x_i \rangle_p$ for some $k>1$, and set $w'=\ad^{r-k-1}(x_i)(b)$. Again, it is clear that
	\[ \langle x_i \rangle_p \subseteq  \langle x_i,w' \rangle_p\subseteq  \langle x_i,b \rangle_p,\] where one inclusion has to be an equality.
	By assumption,  if $b\in  \langle x_i,w' \rangle_p= \langle x_i \rangle_p +  \langle w' \rangle_p$, then $[x_i,b]\in  \langle x_i \rangle_p$. Therefore $w'\in  \langle x_i \rangle_p$, and by induction  we have that $[x_i,b]\in  \langle x_i \rangle_p$.
\end{proof}

	Note that, although any abelian or almost abelian restricted Lie algebra is upper semimodular, the converse of Theorem~\ref{th:uppergral} does not hold, as the following example shows.

\begin{ex}\label{ex:usmn}\emph{
 Let $L=\langle x,y,z\rangle_p$ with $x^{[p]^2}=y^{[p]}=z^{[p]}=0$ and $[x,y]=z$ as the only non-zero product. Then the restricted subalgebra  $B=\F x^{[p]}\oplus \F y \oplus \F z$ generated by all the one-dimensional restricted subalgebras is abelian. However, $L$ is not upper semimodular, as $\langle x\rangle_p \cap \F y =0$ is maximal in $\F y$, but $\langle x\rangle_p$ is not maximal in $\langle x,y\rangle_p=L$.}
\end{ex}



\begin{prop}\label{p:nilp}
	Let $\F $ be an algebraically closed field of characteristic $p>0$, and let $L$ be an upper semimodular restricted Lie algebra over $\F $. Then, $L$ is  
	almost abelian or
	nilpotent. 
\end{prop}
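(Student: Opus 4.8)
The plan is to reduce to the classification in Theorem~\ref{th:uppergral}, so the first step is to dispose of the abelian and almost abelian cases, which are trivially nilpotent or on the desired list. Hence assume $L$ is in the third case: $L=\langle x_1,\dots,x_r,B\rangle_p$ with $B$ abelian, each $x_i$ $p$-nilpotent of order $n_i>1$, and $[L,L]\subseteq\langle x_1,\dots,x_r\rangle_p=:N$. The goal is then to show that $L$ (not merely $B$) is nilpotent; since each generator $x_i$ of $N$ is $p$-nilpotent and $[L,L]\subseteq N$, it is natural to try to prove that $\ad(x_i)$ is a nilpotent operator on $L$ for every $i$, and that $\ad(b)$ is nilpotent for every $b\in B$ generating a one-dimensional restricted subalgebra; then Engel's theorem finishes the job.

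The key computation will be to analyse, for a fixed $i$, the action of $\ad(x_i)$ on a one-dimensional restricted subalgebra $\F b$ with $b\in B$. From the proof of Theorem~\ref{th:uppergral} we already know $[x_i,b]\in\langle x_i\rangle_p$, i.e. $[x_i,b]=\sum_{k\ge 0}\mu_k x_i^{[p]^k}$ for scalars $\mu_k$. The point I would push is that $\mu_0=0$: writing $w=\ad^{r-1}(x_i)(b)$ for the last nonzero iterate and using that $w\in\langle x_i\rangle_p$ (again from the proof of Theorem~\ref{th:uppergral}), one gets $0=\ad(x_i)(w)$ is a combination of the $x_i^{[p]^k}$, and comparing with $[x_i,b]=\mu_0 b+\cdots$ — more precisely, examining $\ad^{p^{n_i-1}}(x_i)$ applied to $b$, which equals $[x_i^{[p]^{n_i-1}},b]=0$ because $x_i$ has order $n_i$ — forces $\mu_0=0$. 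Thus $[x_i,b]\in\langle x_i^{[p]}\rangle_p\subseteq N$, and iterating, $\ad(x_i)$ maps the span of the one-dimensional restricted subalgebras of $B$ into $\langle x_i^{[p]}\rangle_p$, on which all $\ad(x_j)$ vanish (as $x_j^{[p]^k}$ for $k\ge1$ is $p$-nilpotent of strictly smaller order and, being in $B$ after one application of $[p]$... ) — so $\ad(x_i)^2$ kills those generators. Combining this with $[x_i,x_j]\in N$ and nilpotency of the restricted structure, one concludes $\ad(x_i)$ is nilpotent on all of $L$.

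Once every $\ad(x_i)$ is nilpotent and $[L,L]\subseteq N=\langle x_1,\dots,x_r\rangle_p$, I would argue that every element of $L$ acts nilpotently: an arbitrary $u\in L$ is, modulo $B$, a combination of the $x_i$ together with central-type contributions from $B$, and since $\ad(u)$ differs from a sum of the $\ad(x_i)$ by an operator with image in $[L,L]\subseteq N$ on which things collapse, $\ad(u)$ is nilpotent. Then Engel's theorem gives that $L$ is nilpotent, completing the dichotomy ``almost abelian or nilpotent.''

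The main obstacle I anticipate is the bookkeeping around the $[p]$-operator: proving cleanly that $\ad(x_i)$ sends the relevant one-dimensional restricted subalgebras into a subalgebra on which the remaining $\ad(x_j)$ act nilpotently, and handling the interaction between the $x_j$'s themselves (the brackets $[x_i,x_j]$ lie in $N$ but one must still control their $\ad$-action). The orders $n_i>1$ and the structure $[L,L]\subseteq N$ are exactly what should make this work, but getting the induction on orders to close without circularity — i.e. ensuring that $x_i^{[p]}$ genuinely lands in a ``simpler'' part of $L$ — is the delicate point. If that estimate resists a direct argument, the fallback is to pass to $L/\langle L^{[p]}\rangle_p$-type quotients or to invoke minimality of a counterexample, reducing to the case where $N$ is small and checking it by hand.
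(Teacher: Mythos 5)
There is a genuine gap, and it sits exactly at the heart of the proposition. Reducing via Theorem~\ref{th:uppergral} is legitimate (that theorem is proved independently of Proposition~\ref{p:nilp}), and the ad-nilpotency of each $x_i$ is actually automatic, since $\ad^{p^{n_i}}(x_i)=\ad(x_i^{[p]^{n_i}})=0$. The real issue is the elements of $B$: a toral $t\in B$ satisfies $\ad^p(t)=\ad(t)$, so $\ad(t)$ is a semisimple operator and is nilpotent only if $t\in Z(L)$. Thus the whole content of the proposition is to show that the semisimple elements of $B$ centralise the $x_i$, which in your notation is precisely the claim $\mu_0=0$. Your argument for this does not work: if $x_i$ has order $n_i$ then $x_i^{[p]^{n_i-1}}\neq 0$, so $[x_i^{[p]^{n_i-1}},b]$ is not zero ``because $x_i$ has order $n_i$''; and in any case $\ad^2(x_i)(b)=[x_i,\sum_k\mu_k x_i^{[p]^k}]=0$ holds for \emph{every} value of $\mu_0$, so iterating $\ad(x_i)$ on $b$ can never detect $\mu_0$. (That $\mu_0$ need not vanish in general is shown by the almost abelian algebra itself, where $[y,a]=a$; so any proof of $\mu_0=0$ must genuinely use that this case has been excluded, e.g.\ as in the proof of Proposition~\ref{prop:usmalab}.) The final step is also not sound as stated: a Lie algebra generated by ad-nilpotent elements need not be nilpotent ($\mathfrak{sl}(2,\F)$ is generated by two such), so Engel's theorem requires ad-nilpotency of \emph{all} elements, and ``$\ad(u)$ differs from a sum of the $\ad(x_i)$ by an operator with image in $N$'' does not yield nilpotency of $\ad(u)$, since sums of nilpotent operators need not be nilpotent.

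For comparison, the paper closes exactly this gap by a toral argument rather than by controlling structure constants. Since every one-dimensional restricted subalgebra lies in $B$ and $B$ is abelian (Propositions~\ref{prop:clas} and~\ref{prop:usmalab}), all semisimple elements of $L$ form a single maximal torus $\mathfrak{T}$. If $L$ were not nilpotent, the Cartan subalgebra $H=C_L(\mathfrak{T})$ would admit a nontrivial root space $L_\alpha$ with $\alpha(t)\neq 0$ for some toral $t$; computing $(t+x)^{[p]^n}=t+\sum_{i}x^{[p]^{i}}$ for $x\in L_\alpha$ and using that this power is eventually semisimple, hence in $\mathfrak{T}\subseteq H$, forces $x\in H$, a contradiction. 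If you want to salvage your line of attack, you would need to prove directly that every toral $t\in B$ satisfies $[t,x_i]\in\langle x_i^{[p]}\rangle_p$ (for instance by exploiting upper semimodularity as in the proof of Proposition~\ref{prop:usmalab}, where the relation $[x,y]=-\lambda x^{[p]}$ is derived and then contradicted); as written, that step is missing.
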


\begin{proof} Assume that $L$ is not almost abelian.
Let $T$ be a torus of $L$. By~\cite[Chapter~2, Theorem 3.6]{sf}, $T$ has a basis consisting of toral elements and therefore $T\subseteq B$, in the notation of Theorem~\ref{th:uppergral}. 
Consequently, every semisimple element of $L$ belongs to $B$, and the restricted subalgebra $\mathfrak{T}$ formed by the semisimple elements of $L$ is the unique maximal torus of $L$. Suppose, by contradiction, that $L$ is not nilpotent. Consider the Cartan subalgebra $H=C_L(\mathfrak{T})$ and the associated root space decomposition $L=H\dot{+}(\sum_{\alpha\in \Phi}L_{\alpha})$.  Then there exists $\alpha\in \Phi$ and a toral element $t\in \mathfrak{T}$ such that $\alpha(t)\neq 0$. Let $x\in L_{\alpha}$, $x\neq 0$. By  
\cite[Chapter 2, Corollary 4.3(1)]{sf}, we have $[t,x]=\alpha(t)x$ and $\alpha(t) \in \mathrm{GF}(p)$. Thus one has
$$
(t+x)^{[p]}=t+x^{[p]}+\alpha(t)^{p-1}x=t+x^{[p]}+x.
$$
Moreover, by  \cite[Chapter 2, Corollary 4.3(3)]{sf} we have that $x^{[p]}\in H$ and so $[t,x^{[p]}]=0$. By induction, it follows that
\begin{equation}\label{t+x}
(t+x)^{[p]^n}=t+ \sum_{i=0}^n x^{[p]^{n}}
\end{equation}
for every $n>0$. Now, by \cite[Chapter 2, Theorem 3.4]{sf} we see that $(t+x)^{[p]^n}	\in \mathfrak{T}$ for some sufficiently large $n$, and so we deduce from (\ref{t+x}) that $x\in H$, a contradiction.
\end{proof}

\begin{coro}\label{c:lj}
	Let $\F $ be an algebraically closed field of characteristic $p>0$, and let $L$ be an upper semimodular restricted Lie algebra over $\F $. Then, $L$ is also lower semimodular and a $J$-algebra.
\end{coro}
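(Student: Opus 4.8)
The plan is to deduce Corollary~\ref{c:lj} from the structural dichotomy provided by Proposition~\ref{p:nilp}: an upper semimodular restricted Lie algebra $L$ over an algebraically closed field is either almost abelian or nilpotent. I would treat these two cases separately, since in each of them the conclusion that $L$ is lower semimodular and a $J$-algebra can be obtained from results already in hand, notably Proposition~\ref{p:equiv}, which gives the equivalence of lower semimodularity, being a $J$-algebra, and supersolvability for \emph{solvable} restricted Lie algebras over algebraically closed fields.

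First I would dispose of the almost abelian case. If $L=\F x\dot{+}A$ with $A$ an abelian ideal on which $\ad(x)$ acts as the identity, then $L$ is clearly solvable (indeed $L^{(2)}=0$), and it is supersolvable: a complete flag of ideals is obtained by taking any flag $0=A_0\subsetneq A_1\subsetneq\cdots\subsetneq A_{n-1}=A$ of subspaces of $A$ (each $A_i$ is an ideal of $L$ since $\ad(x)$ acts as a scalar on $A$), and then adjoining $L$ itself. Hence $L$ is supersolvable, and Proposition~\ref{p:equiv} immediately yields that $L$ is lower semimodular and a $J$-algebra. Second, the nilpotent case: any nilpotent restricted Lie algebra is solvable, and over an algebraically closed field it is supersolvable — this follows from Lemma~\ref{flag} together with the standard fact that a nilpotent Lie algebra admits a complete flag of ideals (or one can argue by induction, using that a nonzero nilpotent $L$ has a nonzero centre, pick a one-dimensional restricted central ideal via the toral/$p$-nilpotent splitting as in Lemma~\ref{flag}, and pass to the quotient). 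Once supersolvability is established, Proposition~\ref{p:equiv} again gives both conclusions.

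I expect the main (minor) obstacle to be making sure the reduction to supersolvability is clean in the nilpotent case: one must confirm that a nilpotent restricted Lie algebra over an algebraically closed field really is supersolvable in the \emph{restricted} sense required by Proposition~\ref{p:equiv} — that is, that the flag can be taken to consist of ordinary ideals (not necessarily restricted ones), which is all that the definition of supersolvable in Section~2 demands. Since the definition of supersolvable only asks for a complete flag of ideals of $L$ (restrictedness is not part of it), and every subspace of the centre is an ideal, this is straightforward: by induction on $\dim L$, a nonzero nilpotent $L$ has $Z(L)\neq 0$, choose a one-dimensional subspace $\F z\subseteq Z(L)$, which is an ideal; by induction $L/\F z$ is supersolvable, and pulling the flag back gives a complete flag of ideals of $L$. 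Thus $L$ is supersolvable and Proposition~\ref{p:equiv} applies. Combining the two cases completes the proof.
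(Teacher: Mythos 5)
Your proof is correct and follows essentially the same route as the paper, which simply cites Proposition~\ref{p:nilp} and Proposition~\ref{p:equiv}; you have merely filled in the (correct) details of why almost abelian and nilpotent algebras are supersolvable so that Proposition~\ref{p:equiv} applies.
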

\begin{proof}
	It follows from Proposition~\ref{p:nilp} and Proposition~\ref{p:equiv}.
\end{proof}

\begin{prop}\label{p:qi}
Let $\F $ be an algebraically closed field of characteristic $p>0$, and let $L$ be an upper semimodular restricted Lie algebra over $\F $. Then, every restricted subalgebra of $L$ is a restricted quasi-ideal.
\end{prop}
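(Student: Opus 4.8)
The plan is to use the structure theorem for upper semimodular restricted Lie algebras established so far, and reduce to the two cases it provides. By Proposition~\ref{p:nilp}, $L$ is either almost abelian or nilpotent. If $L$ is almost abelian, say $L=\F x\,\dot{+}\,A$ with $A$ abelian and $\ad(x)$ acting as the identity on $A$, then for restricted subalgebras $S,H$ one checks directly that $[S,H]\subseteq S+H$: since $L^2=A$ is abelian and $[L,L]\subseteq A$, the only brackets that can fail to lie in $S\cap H$ come from the $\F x$-component, and these are absorbed by $S+H$ because $\ad(x)$ scales elements of $A$. This is a short routine verification, so I would dispatch it quickly.

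The substantive case is when $L$ is nilpotent. Here I would invoke Corollary~\ref{c:lj}: $L$ is lower semimodular and a $J$-algebra, hence by Proposition~\ref{p:equiv} (noting nilpotent implies solvable) $L$ is supersolvable, and by Lemma~\ref{flag} $L$ admits a complete flag of restricted ideals. The key consequence I want is that every maximal restricted subalgebra of $L$ has codimension one. Now let $S,H$ be restricted subalgebras; I want $[S,H]\subseteq S+H$. It suffices to handle the case where $S+H$ is a proper subspace and to show $\langle S,H\rangle_p$ is not much larger; more precisely, using upper semimodularity together with the fact that $S\cap H$ sits in a maximal chain inside $H$, one forces $S$ to be maximal — of codimension one — in $\langle S,H\rangle_p$, so $\langle S,H\rangle_p = S+H$ as vector spaces, and then $[S,H]\subseteq \langle S,H\rangle_p = S+H$.

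The cleanest route, which I would take, is an induction on $\dim H$ mimicking the proof of Proposition~\ref{prop:usmalab} and Theorem~\ref{th:uppergral}: pick a one-dimensional restricted subalgebra $\langle b\rangle_p$ of $H$ with $\langle b\rangle_p\cap S$ maximal in $\langle b\rangle_p$ (either $0$ or all of $\langle b\rangle_p$); upper semimodularity then gives $S$ maximal in $\langle S,b\rangle_p$, so $[S,b]\subseteq \langle S,b\rangle_p = S+\langle b\rangle_p$; iterate over a generating set of one-dimensional restricted subalgebras of $H$ (which exist by Proposition~\ref{prop:atom} applied inside the relevant subalgebra, or directly since $H$ is nilpotent hence contains toral or square-zero one-dimensional subalgebras), absorbing at each stage. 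The main obstacle I anticipate is the bookkeeping when $H$ is not generated by its one-dimensional restricted subalgebras — e.g. the situation of Example~\ref{ex:usmn} where a $p$-nilpotent generator of higher order appears; there one must bracket with such an $x_i$ directly and use that, in the nilpotent supersolvable setting, $\langle S,x_i\rangle_p$ has $S$ of codimension one whenever $\langle S,x_i\rangle_p$ is a proper step, exactly as in the chain argument of Theorem~\ref{th:uppergral}. Assembling these local containments $[S,h]\subseteq S+H$ for $h$ ranging over generators of $H$ yields $[S,H]\subseteq S+H$, completing the proof.
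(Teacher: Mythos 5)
Your reduction via Proposition~\ref{p:nilp} to the two cases (almost abelian, nilpotent) matches the paper, and your dispatch of the almost abelian case is fine: for $s=\alpha x+a\in S$ and $h=\beta x+a'\in H$ one has $[s,h]=\alpha a'-\beta a=\alpha h-\beta s\in S+H$. The gap is in the nilpotent case. Your plan is to fix a restricted subalgebra $S$ and bracket it against generators of $H$, using upper semimodularity with $T=\langle b\rangle_p$ one-dimensional. That works when $\langle b\rangle_p\cap S=0$ is maximal in $\F b$, but you yourself note that $H$ need not be generated by one-dimensional restricted subalgebras, and your proposed fix for a $p$-nilpotent generator $x_i$ of order $>1$ --- ``use the chain argument of Theorem~\ref{th:uppergral}'' --- does not transfer. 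In that theorem the hypothesis of upper semimodularity is verified because $\langle x_i\rangle_p\cap\langle b\rangle_p=0$ is maximal in the \emph{one-dimensional} algebra $\F b$; for a general restricted subalgebra $S$ and a higher-order $x_i$ you would need $S\cap\langle x_i\rangle_p$ to be maximal in $\langle x_i\rangle_p$, i.e.\ equal to $\langle x_i^{[p]}\rangle_p$, and there is no reason for that to hold. So the step ``$S$ is maximal, hence of codimension one, in $\langle S,x_i\rangle_p$'' is unsupported exactly in the case that matters (cf.\ Example~\ref{ex:usmn}).

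The missing idea is the paper's reduction to pairs of \emph{elements}: it suffices to prove $[x,y]\in\langle x\rangle_p+\langle y\rangle_p$ for all $x,y\in L$, since $\langle x\rangle_p\subseteq S$ and $\langle y\rangle_p\subseteq H$ then give $[S,H]\subseteq S+H$ directly, with no assembly over a generating set. One then splits $x=x_s+x_n$, $y=y_s+y_n$ by Jordan--Chevalley (checking $x_n\in\langle x\rangle_p$), disposes of the semisimple parts via $B$, and for two $p$-nilpotent elements runs an induction on the \emph{sum of their orders of $p$-nilpotency}: either $x\in\langle x^{[p]},y\rangle_p$ and induction applies, or $\langle x\rangle_p\cap\langle x^{[p]},y\rangle_p=\langle x^{[p]}\rangle_p$, which \emph{is} always maximal in $\langle x\rangle_p$, so upper semimodularity makes $\langle x^{[p]},y\rangle_p$ maximal (hence of codimension one) in $\langle x,y\rangle_p$ and the induction closes. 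This is the device that replaces your chain argument and handles precisely the higher-order $p$-nilpotent elements; without it, or an equivalent substitute, your proof does not go through.
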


\begin{proof}
By Proposition~\ref{p:nilp}, $L$ is either almost abelian or nilpotent. If $L$ is almost abelian, then we are done, so suppose that it is nilpotent. Let $x,y\in L$. If $x,y$ are semisimple, then we have that $x,y\in B$ and $[x,y]=0$. If $x$ is semisimple and $y$ is $p$-nilpotent, then $x\in B$ and we get that $[x,y]\in\langle y\rangle_p$ as in the proof of Theorem~\ref{th:uppergral}. If $x,y$ are $p$-nilpotent, we claim that $[x,y]\in\langle x\rangle_p+\langle y\rangle_p$. Indeed, let $s$ be the sum of their orders of $p$-nilpotency. We will proceed by induction on $s$. If $s=2$, then $x,y\in B$ and therefore $\langle x,y \rangle_p \subseteq  \langle x \rangle_p + \langle y \rangle_p$. Fix now $s>2$, and assume that $x^{[p]}\neq 0$. If $x\in \langle x^{[p]},y \rangle_p$, it holds that $\langle x,y \rangle_p =\langle x^{[p]},y \rangle_p $ is contained in $\langle x^{[p]} \rangle_p + \langle y \rangle_p$ by induction. Otherwise,  $\langle x^{[p]} \rangle_p = \langle x \rangle_p \cap \langle x^{[p]},y \rangle_p$ is maximal in $\langle x \rangle_p$, so $\langle x^{[p]},y \rangle_p$ is maximal in $ \langle x,y \rangle_p$. Then $\langle x^{[p]},y \rangle_p$ has codimension one in $ \langle x,y \rangle_p$ and $\langle x,y \rangle_p = \langle x \rangle_p + \langle x^{[p]},y \rangle_p$. But by induction, $\langle x^{[p]},y \rangle_p\subseteq \langle x^{[p]} \rangle_p + \langle y \rangle_p$.

Now take $x,y$ two arbitrary elements in $L$ and consider their Jordan-Chevalley decompositions, $x=x_s + x_n$ and $y=y_s + y_n$. The above arguments show that $[x,y]\in\langle x_n\rangle_p + \langle y_n\rangle_p$. Since $x_s^{[p]^r}\in \langle x \rangle_p $ and $y_s^{[p]^t}\in \langle y \rangle_p $ for $r$ and $t$ large enough and $x_s,y_s$ are semisimple, we get that $x_n\in \langle x \rangle_p $ and $y_n\in \langle y \rangle_p $. It follows that $[x,y]\in\langle x\rangle_p + \langle y\rangle_p$.
\end{proof}

The following simple lemma is all what is left to prove Theorem~\ref{main:upper}. We need an easy consideration first.

Let $X$ be a restricted quasi-ideal of a restricted Lie algebra $L$. Then, for every restricted subalgebra $Y$ of $L$, it holds that $X+Y=\langle X,Y\rangle_p$ is a restricted subalgebra of $L$.

\begin{lemma}\label{lemma:mod}
	Let $L$ be a restricted Lie algebra in which every restricted subalgebra is a restricted quasi-ideal. Then, $L$ is  modular, and consequently, upper semimodular and  lower semimodular.
\end{lemma}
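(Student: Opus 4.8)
The statement to prove is: if every restricted subalgebra of $L$ is a restricted quasi-ideal, then $L$ is modular, hence upper and lower semimodular. Recall that in lattice terms, modularity of the subalgebra lattice means that for all restricted subalgebras $A, B, C$ with $A \subseteq C$ we have $A + (B \cap C) = (A+B) \cap C$, where here ``$+$'' is the lattice join, i.e. the restricted subalgebra generated by the two. The key point provided just before the lemma is that when $X$ is a restricted quasi-ideal, the set-theoretic sum $X+Y$ already coincides with the join $\langle X, Y\rangle_p$ for every restricted subalgebra $Y$; so under our hypothesis the lattice join is always just the vector-space sum. This reduces the modular identity to a purely linear-algebra fact about subspaces.

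\textbf{Key steps.} First I would fix restricted subalgebras $A, B, C$ of $L$ with $A \subseteq C$. Since $A$, $B$, $B\cap C$ are all restricted subalgebras, hence restricted quasi-ideals, the preceding remark gives that $A + B$ and $A + (B\cap C)$ are restricted subalgebras and equal the corresponding lattice joins. Next I would invoke the classical Dedekind modular law for subspaces of a vector space: whenever $A \subseteq C$ one has $A + (B\cap C) = (A+B)\cap C$ as subspaces. Combining these two observations, the lattice-theoretic modular identity holds in the lattice of restricted subalgebras of $L$, so $L$ is modular. Finally, modularity of a lattice implies both upper and lower semimodularity (this is standard; see e.g. \cite{birkhoff}), so $L$ is upper semimodular and lower semimodular. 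One small check worth spelling out is that the inclusion $A \subseteq C$ is genuinely used in the Dedekind law, and that all the sums appearing really are restricted subalgebras and not merely subspaces --- but this is exactly what the quasi-ideal hypothesis supplies.

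\textbf{Main obstacle.} There is essentially no serious obstacle here: the entire content has been isolated in the one-line remark preceding the lemma, namely that a restricted quasi-ideal $X$ satisfies $X + Y = \langle X, Y\rangle_p$. The only thing to be careful about is the distinction between the lattice join and the vector-space sum, since in general for Lie (or restricted Lie) subalgebras the sum of two subalgebras need not be a subalgebra; the quasi-ideal condition is precisely what collapses this distinction, after which the Dedekind law for vector subspaces finishes the argument. I expect the proof to be only two or three lines long.
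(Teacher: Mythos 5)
Your proposal is correct and follows essentially the same route as the paper: the paper likewise uses the remark that a restricted quasi-ideal $X$ satisfies $\langle X,Y\rangle_p = X+Y$ to reduce the modular identity to the Dedekind law for subspaces, which it then verifies by the standard two-line element chase ($z=x+y\in Z$ with $x\in X\subseteq Z$ forces $y\in Y\cap Z$). The only cosmetic difference is that you cite the vector-space modular law as known while the paper writes out that short verification.
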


\begin{proof}
	Let $X$, $Y$ and $Z$ be restricted subalgebras of $L$ such that $X\subseteq Z$. Take $z\in\langle X,Y\rangle_p\cap Z=(X+Y)\cap Z$, and write $z=x+y$ for some $x\in X$, $y\in Y$. Then $x\in Z$, yielding that $y\in Y\cap Z$. Therefore, $z\in X + (Y\cap Z)=\langle X, Y\cap Z \rangle _p$. Then, 
	$L$ is modular.
\end{proof}

It is now a simple matter to prove the main result of this section:

\smallskip
\emph{Proof of Theorem \ref{main:upper}.} It follows from the combination of Proposition~\ref{p:nilp}, Proposition~\ref{p:qi}, Proposition~\ref{p:car2} and Lemma~\ref{lemma:mod}. \qed 

\section*{Acknowledgements}
We are deeply grateful to A. Premet for some useful discussions.

The first author was supported by Agencia Estatal de Investigaci\'on (Spain), Grant PID2020-115155GB-I00 (European FEDER support included, UE), and by Xunta de Galicia, Grant ED431C 2019/10 (European FEDER support included, UE).


\end{document}